\newcommand{\Sym}{\mathbb{S}} 
\newcommand{\R}{\mathbb{R}}
\def\SS{\mathcal S}
\newcommand\Mat{\mathbb{M}}
\renewcommand\P{{\mathcal P}}
\newcommand\Q{{\mathcal Q}}
\newcommand\RT{{\mathcal R\mathcal T}}
\newcommand\BDM{{\mathcal B\mathcal D\mathcal M}}
\newcommand\T{{\mathcal T}}
\newcommand\tr{\operatorname{tr}}
\newcommand\x{\times}
\renewcommand{\div}{\operatorname{div}} 
\newcommand{\grad}{\operatorname{grad}} 
\newcommand{\curl}{\operatorname{curl}} 
\newcommand{\as}{\operatorname{asym}} 
\newtheorem{thm}{Theorem}
\newtheorem{lemma}{Lemma}
\newtheorem{rem}{Remark} 
\def\<{\langle} 
\def\>{\rangle} 
\newcommand\norm[1]{\lVert#1\rVert}
\newcommand\0{\hphantom{0}}
\begin{document}

\title{Mixed finite elements for elasticity on quadrilateral meshes}
\thanks{The work of the first author was partially supported by
NSF grant DMS-1115291 and the Leverhulme Foundation.  The work of the second author
was partially supported by NSF grant DMS-0811052 and a 2009-2011
Sloan Foundation Fellowship.  This work was begun when the
authors were visitors of the Institute for Mathematics and its
Applications in 2010--2011 and completed while the first
author was visiting the University of Cambridge.
The authors would like to thank the anonymous referees for their suggestions.}

\author{Douglas N. Arnold, Gerard Awanou, and Weifeng Qiu}

\address{Department of Mathematics, University of Minnesota, Minneapolis,
Minnesota 55455}

\email{arnold@umn.edu}
\urladdr{http://www.ima.umn.edu/\~{}arnold}

\address{
Department of Mathematics, Statistics, and Computer Science (M/C 249), University of Illinois at Chicago,
Chicago, IL, 60607-7045}
\email{awanou@math.uic.edu}  
\urladdr{http://www.math.uic.edu/\~{}awanou}

\address{Department of Mathematics
Y6524 (Yellow Zone) 6/F Academic 1,
City University of Hong Kong,
Tat Chee Avenue,
Kowloon Tong,
Hong Kong}
\email{weifeqiu@cityu.edu.hk} 
\urladdr{http://www6.cityu.edu.hk/ma/people/profile/qiuf.htm}

\keywords{mixed finite element method; linear elasticity; quadrilateral elements.}

\renewcommand{\subjclassname}{\textup{2000} Mathematics Subject Classification}
\subjclass{Primary: 65N30, Secondary: 74S05}

\maketitle

\begin{abstract}
We present stable mixed finite elements for planar linear
elasticity on general quadrilateral meshes.  The symmetry of
the stress tensor is imposed weakly and so there are three
primary variables, the stress tensor, the displacement vector
field, and the scalar rotation.  We develop and analyze a stable family of methods,
indexed by an integer $r \geq 2$ and with rate of convergence in the $L^2$ norm of
order $r$ for all the variables.
The methods use Raviart--Thomas elements for the
stress, piecewise tensor product polynomials for the displacement,
and piecewise polynomials for the rotation. 
We also present a simple
first order element, not belonging to this family.  It 
uses the lowest order BDM elements for the stress,
and piecewise constants for the displacement and rotation, and
achieves first order convergence for all three variables.

\end{abstract}
\maketitle

\section{Introduction}\label{s:intro}
In this paper we present \emph{mixed finite elements} for planar linear
elasticity which are stable for general quadrilateral meshes.
The mixed methods we consider are of the \emph{equilibrium} type in which
the approximate stress tensor $\sigma_h$ belongs to $H(\div)$ and
satisfies the equilibrium condition $\div\sigma_h = f$ exactly,
at least for loads $f$ which are piecewise polynomial of low
degree.  However, the methods are based on the mixed formulation
of elasticity with \emph{weakly imposed symmetry}, so that the
condition of balance of angular momentum, that is the symmetry
of the stress tensor, will be imposed only approximately, via a
Lagrange multiplier, which may be interpreted as the rotation.
Thus, we consider a formulation in which there are three primary
variables, the stress tensor, the displacement vector field,
and the scalar rotation.   See \eqref{elasSys} below.

For this formulation, we propose a family of stable triples of elements, one for each order $r\ge 2$.
The lowest order elements, $r=2$,  are illustrated
in Figure~\ref{elts1}.  For these we use the second lowest order quadrilateral
Raviart-Thomas elements for each row of the stress tensor,
discontinuous piecewise bilinear functions for each component of the displacement,
and discontinuous piecewise linear functions for the rotation. This method converges with second order
in the $L^2$ norm for all the variables. 
We also propose a simpler choice of elements,
illustrated in Figure~\ref{elts2}. It
uses the lowest order rectangular BDM elements for each row of the stress field and
piecewise constants for both the displacement and the rotation, and converges
with first order in the $L^2$ norm for all the variables. 

An important point is how the finite element shape functions are transformed from
a reference element to an actual quadrilateral element.  In order to achieve a stable
discretization we use different transformations for the stress, the displacement, and the rotation.
The displacement field is simply transformed by
composition with the inverse of the bilinear map from the reference element to the quadrilateral,
while the stress is mapped by the Piola transform (applied row-by-row).  The shape functions
for the rotation, in contrast, are not obtained by a transformation from the reference element,
but are simply the restriction of polynomials to the actual element.

Mixed finite elements for elasticity have many well-known advantages: robustness with
respect to material parameters, applicability to more general constitutive laws
such as viscoelasticity, etc.  Recently many mixed finite elements have been
developed, especially for the formulation in which the symmetry of the stress
tensor is imposed weakly (see the next section for a fuller discussion).
Stable elements have been developed for both triangles and rectangles.  The
latter apply easily to parallelograms as well.  However, up until now,
for the formulation with weakly imposed symmetry condition on the stress field,
there have been no stable mixed finite elements available for meshes
including general convex quadrilateral elements, even though such meshes
are preferred by many practitioners and implemented in many finite element
software systems.

Stable pairs of stress and displacement elements for equilibrium
mixed formulations of elasticity have been sought since the 1960s.
The first elements which were shown to be stable were proposed
in \cite{watwood-hartz} and analyzed in \cite{Johnson1978}.
These elements impose symmetry strongly, but they are composite
elements, in which the stress elements are piecewise linear with
respect to a subdivision into three triangles of each element of
the triangular mesh used for the piecewise linear displacements.
In \cite{Johnson1978} a quadrilateral version is analysed as well,
in which the stress uses a division into four triangular
microelements for each quadrilateral mesh element.  The first
stable elements with polynomial shape functions were not found for
triangular meshes until 2002 \cite{arnold-winther-02}, and then
developed for rectangular meshes in \cite{arnold-awanou-05}.
As far as we know, stable mixed finite elements with strong
symmetry and polynomial reference shape functions
have not yet been discovered for general quadrilateral
meshes.

Because of the difficulty in developing stable mixed methods
with strong symmetry, the idea of imposing symmetry weakly was
proposed already in 1965 \cite{fraeijsdv}.  The first stable
elements for this formulation were given in \cite{amara-thomas}
and \cite{peers}.  Since then numerous stable finite elements
with weak symmetry have been developed for simplicial meshes
\cite{stenberg86,stenberg87,stenberg88,farhloul-fortin}, especially
since the connection with the de~Rham complex and finite element
exterior calculus was made in \cite{AFW2006,AFW-acta}; besides
these papers, see \cite{BBF,CGG,GG2}.  Stable elements for
the mixed formulation with weak symmetry have been devised for
rectangular meshes as well \cite{morley,awanou}.  The element
which we develop in the next section of this paper are, to the
best of our knowledge, the first stable mixed finite elements
with weak symmetry for general quadrilateral meshes.
For a survey of mixed finite elements for elasticity through 2008,
we refer to \cite{Falk08b}.

In the following section we discuss mixed methods based on weakly
imposed symmetry in more detail, and recall the conditions
required for stable discretization and quasioptimal estimates.
In Section~\ref{s:construct}, we present a framework for the construction of stable
elements, based on two main ingredients: the connection between elasticity elements
and stable mixed finite elements for the Stokes equation and for
the Poisson equation, and the properties of various  transformations
of scalar, vector, and matrix fields.  Based on this framework, in Section~\ref{s:elts}
we define the finite elements described above and verify their stability.
In Section~\ref{s:rates}, we use the usual tools of mixed
methods to obtain improved rates of convergence in $L^2$. 
Finally, in Section~\ref{s:numer}, we illustrate the performance of the proposed
elements with numerical computations.

\section{Elasticity with weakly imposed symmetry and its discretization}\label{s:elas}
In this section we recall the weak formulation of the elasticity system
based on weak imposition of the symmetry of the stress tensor,
and its discretization by Galerkin's method.  We then summarize the
basic stability conditions and resulting error
estimate for such a method, and present a framework in which stable
subspaces can be constructed.

We write $\Mat$ and $\Sym$ for the spaces of $2\x2$ matrices and
symmetric matrices, respectively.  Let $\Omega$ be a bounded domain in $\R^2$
occupied by an elastic body.  The material properties are described,
at each point $x\in\Omega$, by the compliance tensor $A=A(x)$,
a linear operator $\Sym\to\Sym$ which is symmetric (with respect
to the Frobenius inner product) and positive definite.  We shall
assume that the compliance tensor is bounded and uniformly positive
definite on $\Omega$.  We shall also require an extension
of $A$ to an operator $\Mat\to\Mat$ which is still symmetric
and positive definite.  This can be obtained, for example, by defining
$A$ to act as a positive multiple of the identity on skew-symmetric
matrix fields.  In the case of a homogeneous and isotropic elastic
body,
$$
A \sigma = \frac{1}{2 \mu} \bigg( \sigma - \frac{\lambda}{2 \mu
  + 2 \lambda} \mathrm{tr} \ (\sigma) I \bigg),
\quad \sigma\in\Mat,
$$
where $I$ is the identity matrix and $\mu>0$ and $\lambda\ge0$ are the
Lam\'e constants.

Given a vector field
$f$ on $\Omega$ encoding the body forces, the equations of static
elasticity determine the stress $\sigma:\Omega\to\Sym$, 
and the displacement $u:\Omega\to\R^2$, satisfying the constitutive
equation $A\sigma = \epsilon(u)$, the equilibrium equation $\div\sigma = f$,
and boundary conditions, which, for simplicity, we take
to be $u=0$ on $\partial\Omega$.  Here $\epsilon(u)$ is the symmetric
part of the gradient of $u$ and the divergence operator $\div$ applies to
the matrix field $\sigma$ row-by-row.  Similarly below we shall define
$\curl w$ for  a vector field $w$ as the matrix field whose first row
is $\curl w_1$ and second row is $\curl w_2$, where $\curl q=(\partial_2 q,-\partial_1 q)$
for a scalar function $q$.

To derive
the weak formulation of elasticity which we shall use, we write
$\as\tau=\tau_{12}-\tau_{21}$ for the asymmetry of a matrix $\tau\in\Mat$
and introduce the rotation $p=\as(\grad u)/2$.  The constitutive equation
then becomes
$$
A\sigma = \grad u - \begin{pmatrix} 0 & p \\ -p & 0 \end{pmatrix}. 
$$
This equation, together with the equilibrium equation and the equation
$\as\sigma=0$ explicitly stating the symmetry of $\sigma$, form the
system of differential equations which we shall discretize.  For this we
shall use the weak formulation, which is to find
$(\sigma, u, p)\in H(\div,\Omega,\Mat) \x L^2(\Omega,\mathbb{R}^2)\x L^2(\Omega)$
such that
\begin{align}
\begin{split} \label{elasSys}
(A \sigma,\tau) + (u,\div \tau) + (p,\as \tau)& = 0, \quad \tau \in  H(\div,\Omega,\Mat),\\
(\div \sigma,v) & =(f,v),  \quad v \in L^2(\Omega,\mathbb{R}^2),\\
(\as \sigma,q) & = 0, \quad q \in L^2(\Omega,\mathbb{R}).
\end{split}
\end{align}

It is convenient to define the space
$$
Y = H(\div,\Omega,\Mat) \x L^2(\Omega,\mathbb{R}^2)\x L^2(\Omega)
$$
with the norms
$$
\norm{(\tau,v,q)}_Y=\norm{\tau}_{H(\div)}+\norm{v}_{L^2}+\norm{q}_{L^2},
\quad
\norm{(\tau,v,q)}_{L^2}=\norm{\tau}_{L^2}+\norm{v}_{L^2}+\norm{q}_{L^2},
$$
and to define $B:Y\x Y \to \R$, $F:Y\to \R$ by
\begin{gather}\label{defB}
B(\sigma,u,p;\tau,v,q) = (A \sigma,\tau) + (u,\div \tau) + (p,\as \tau)+
(\div \sigma,v)+ (\as \sigma,q),\\\label{defF}
F(\tau,v,p) = (f,v).
\end{gather}
Note that the bilinear form $B$ is  bounded with respect to the $Y$ norm,
with the bound depending only on the upper bound for the compliance tensor $A$.
In this notation, the weak formulation \eqref{elasSys}
takes the generic form: find $y=(\sigma,u,p)\in Y$ such that
$$
B(y,z) = F(z), \quad z\in Y.
$$
We approximate this by Galerkin's method using
finite element spaces $\Sigma_h\subset H(\div,\Omega,\Mat)$,
$V_h \subset L^2(\Omega,\mathbb{R}^2)$, and $Q_h\subset L^2(\Omega)$.
Setting $Y_h=\Sigma_h\x V_h\x Q_h$, the discrete solution
$y_h=(\sigma_h,u_h,p_h)\in Y_h$ is then defined by
$$
B(y_h,z) = F(z), \quad z\in Y_h.
$$

We now recall some basic stability and convergence results from
the theory of mixed methods.
For our problem, Brezzi's stability conditions \cite{brezzi} are:
\begin{itemize}
\item[(S1)] There exists a positive constant $c_1$ such that
$\norm{\tau}_{H(\div)}\le c_1(A\tau,\tau)^{1/2}$ whenever
$\tau\in\Sigma_h$ satisfies
$(\div\tau, v)=0$ for all $v\in V_h$
and $(\as\tau,q)=0$ for all $q\in Q_h$.
\item[(S2)] There exists a positive constant $c_2$ such
that for each $v\in V_h$ and $q\in Q_h$, there is a nonzero
$\tau\in\Sigma_h$ with
$$
(\div\tau, v)+(\as\tau,q)\ge
c_2\norm{\tau}_{H(\div)}(\norm{v}_{L^2}+\norm{q}_{L^2}).
$$
\end{itemize}
These conditions imply the inf-sup condition for the form
$B$:
\begin{itemize}
\item[(S0)] There exists a positive constant $c_0$
(depending on $c_1$ and $c_2$) such that
for each $y\in Y_h$ there is a nonzero $z\in Y_h$
with $B(y,z)\ge c_0 \norm{y}_Y\norm{z}_Y$.
\end{itemize}
This in turn implies that the Galerkin solution
$(\sigma_h,u_h,p_h)$ exists and is unique, and
that it satisfies a quasioptimal estimate with respect to
the norm in $Y$:
\begin{equation}\label{quasiopt}
\norm{\sigma-\sigma_h}_{H(\div)}+\norm{u-u_h}_{L^2}+\norm{p-p_h}_{L^2}
\le C\inf_{(\sigma,v,q)\in Y_h}(
\norm{\sigma-\tau}_{H(\div)}+\norm{u-v}_{L^2}+\norm{p-q}_{L^2}),
\end{equation}
with $C$ depending only on $c_1$, $c_2$, and an upper bound for $A$. In particular, the constant $C$ is independent 
of the Lam\'e parameter $\lambda$ if $c_1$ and $c_2$ are.

In the next section we study the construction of finite element spaces $\Sigma_h$,
$V_h$, and $Q_h$ satisfying (S1) and (S2).  First, however, we show that
these conditions hold at the continuous level, i.e., when $\Sigma_h$ is replaced
by $H(\div,\Omega,\Mat)$, $V_h$ by $L^2(\Omega,\mathbb{R}^2)$, and $Q_h$ by $L^2(\Omega)$,
and so that the weak problem is well-posed.
To prove the continuous analogue of (S2), we use the fact that for any
$q\in L^2(\Omega)$ there exists $w\in H^1(\Omega,\mathbb{R}^2)$ with $\div w=q$
and $\|w\|_{H^1}\le C\|q\|_{L^2}$.  For example, we may extend $q$ by zero
to a smoothly bounded domain and solve the Dirichlet problem for the Poisson
equation $\Delta u=q$ on that domain.  Then $w=\grad u|_\Omega$ has divergence
$q$ and satisfies the desired bound.

Now let $v\in L^2(\Omega,\R^2)$ and $q\in L^2(\Omega,\R)$. Then we can
choose $\eta \in H^1(\Omega,\mathbb{M})$ such that
$$
\div \eta = v, \quad \norm{\eta}_{H^1} \leq C \norm{v}_{L^2}.
$$
Similarly, we can  choose $w \in H^1(\Omega,\R^2)$ such that
\begin{align*}
\div w = q-\as \eta, \quad
\norm{w}_{H^1} &\leq C \norm{q-\as \eta}_{L^2}.
\end{align*}
If we then set $\tau= \eta - \curl w$, We have
\begin{align*}
\div \tau= \div \eta =v,\quad 
\as \tau = \as \eta+ \div w = q.
\end{align*}
Moreover
$$
\norm{\tau}_{H(\div)} \le \norm{\eta}_{H(\div)} +\norm{w}_{H^1}\le C(\norm{v}_{L^2}
+\norm{q}_{L^2}),
$$
for a constant $C>0$.  This suffices to establish (S2) at the continuous level.

The proof of (S1) at the continuous level is simple: the condition $(\div\tau, v)=0$ for all $v\in L^2(\Omega,\mathbb{R}^2)$
means that $\div\tau=0$, so $\norm{\tau}_{H(\div)}=\norm{\tau}_{L^2}$, which is bounded
by a constant multiple of $(A\tau,\tau)^{1/2}$, since the tensor $A$ is positive definite
for all $\mu>0$, $\lambda\ge 0$.  However, this
argument leads to a constant $c_1$ which is dependent not only on $\mu$, but also
on $\lambda$, and which tends to zero as $\lambda$ tends to infinity, since $A$ loses definiteness
in that limit.  The standard way to rectify this is
to use, instead of the positive definiteness of $A$, the estimate $(A\tau,\tau)\ge (2\mu)^{-1}\|\tau^D\|_{L^2}^2$
where $\tau^D$ is the \emph{deviatoric} or trace-free part $\tau$,
and to invoke the bound
$\|\tau\|_{L^2} \le c\|\tau^D\|_{L^2}$ for all $\tau\in H(\div,\Omega,\Mat)$
which are divergence-free and which satisfy the additional constraint
$\int_\Omega\tr\tau\,dx =0$.  This argument requires that the solution $\sigma$ satisfies
the constraint, for which it suffices to take the test function $\tau$ in \eqref{elasSys}
to be the constant matrix field everywhere equal to the identity.
In this way we may obtain well-posedness uniformly in $\lambda\ge0$.
For details, see, for instance, \cite{peers}, \cite{BBF}, or \cite[Prop.~9.1.1]{BBF-book}.

\section{Construction of stable elements}\label{s:construct}
In view of the preceding section, our goal is to construct
finite element spaces  $\Sigma_h\subset H(\div,\Omega,\Mat)$,
$V_h \subset L^2(\Omega,\mathbb{R}^2)$, and $Q_h\subset L^2(\Omega)$,
satisfying the stability conditions (S1) and (S2).  We shall
present such spaces in the next section.  In Section~\ref{ss:s2}, we consider
constructions that insure condition (S2), and in Section~\ref{ss:s1},
ones that insure (S1).

\subsection{The stability condition (S2)}\label{ss:s2}
In order to attain (S2), we exploit a connection between stable mixed finite
elements for elasticity with weak symmetry and stable mixed finite
elements for the Stokes and Poisson equations.  This connection,
which we recall in Theorem~\ref{s2}, was first observed in
\cite{farhloul-fortin} and has been elaborated and employed in, for example,  \cite{Falk08b,BBF,GG2}.
We note that it does not easily generalize to three dimensions.

A pair of spaces $W_h\subset H^1(\Omega,\R^2)$, $Q_h\subset L^2(\Omega)$,
is stable for the Stokes equations if it satisfies the appropriate inf-sup condition:
\begin{itemize}
\item[(S3)] There exists a positive constant $c_3$ such that
for each $q\in Q_h$ there is a nonzero $w\in W_h$
with $(\div w,q)\ge c_3 \norm{w}_{H^1} \norm{q}_{L^2}$.
\end{itemize}
Numerous stable Stokes pairs are known, and in Section~\ref{s:elts}
we shall choose from among them in order to fulfil (S3).

It is also useful to recall an equivalent form of (S3).
\begin{lemma} \label{eqS3}
 The inf-sup condition {\rm(S3)} holds for some positive constant
$c_3$ if and only if
for all $q\in Q_h$ there exists $w\in W_h$ such that
$P_{Q_h}\div w=q$ and $\norm{w}_{H^1}\le c_3^{-1}\norm{q}_{L^2}$,
where $P_{Q_h}:L^2(\Omega)\to Q_h$ is the $L^2$-projection.
\end{lemma}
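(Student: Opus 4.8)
The plan is to establish the equivalence by unwinding the definitions on each side and using the standard duality between an inf-sup condition and a right-inverse statement. The key observation is that the pairing $(\div w, q)$ for $q \in Q_h$ only sees the component of $\div w$ lying in $Q_h$, i.e. $(\div w, q) = (P_{Q_h}\div w, q)$, since $P_{Q_h}$ is the $L^2$-orthogonal projection and $q \in Q_h$.

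First I would prove the ``if'' direction, which is the easy one. Given $q \in Q_h$, pick $w \in W_h$ with $P_{Q_h}\div w = q$ and $\norm{w}_{H^1} \le c_3^{-1}\norm{q}_{L^2}$. If $q = 0$ we may take any nonzero $w$ (or note the inequality in (S3) is trivially satisfied); if $q \neq 0$, then $w \neq 0$, and
\[
(\div w, q) = (P_{Q_h}\div w, q) = (q,q) = \norm{q}_{L^2}^2 \ge c_3\,\norm{w}_{H^1}\norm{q}_{L^2},
\]
where the last step uses $\norm{w}_{H^1} \le c_3^{-1}\norm{q}_{L^2}$. This is exactly (S3).

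For the ``only if'' direction, assume (S3) holds. Fix $q \in Q_h$; we must produce the required $w$. Consider the linear map $T\colon W_h \to Q_h$, $Tw = P_{Q_h}\div w$. The inf-sup condition (S3) says precisely that for each $q \in Q_h$,
\[
\sup_{0 \neq w \in W_h} \frac{(Tw, q)}{\norm{w}_{H^1}} = \sup_{0 \neq w \in W_h} \frac{(\div w, q)}{\norm{w}_{H^1}} \ge c_3 \norm{q}_{L^2},
\]
so $T$ is surjective and, restricted to the orthogonal complement of $\ker T$, bounded below. Hence given $q \in Q_h$ there is a (unique) $w$ in $(\ker T)^\perp$ with $Tw = q$, and the lower bound gives $\norm{w}_{H^1} \le c_3^{-1}\norm{q}_{L^2}$. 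Concretely, one can pick any $w_0 \in W_h$ with $Tw_0 = q$ and then let $w$ be the component of $w_0$ orthogonal (in $H^1$) to $\ker T$; the bound then follows by applying (S3) to $q$ with this particular $w$ as a candidate in the supremum, after checking the supremum is attained there up to the orthogonality. I expect the only mildly delicate point is this last bookkeeping — making sure one selects the minimal-norm preimage so that the constant comes out as $c_3^{-1}$ rather than something larger — but since $W_h$ and $Q_h$ are finite-dimensional, surjectivity of $T$ and attainment of suprema are automatic, so there is no real analytic obstacle.
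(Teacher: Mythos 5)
Your ``if'' direction is correct and coincides with the easy half of the paper's argument. The gap is in the ``only if'' direction, at precisely the point you dismiss as bookkeeping. Writing $Tw=P_{Q_h}\div w$ (the paper's $L_h$), condition (S3) says $\norm{T^*q}_{H^1}\ge c_3\norm{q}_{L^2}$ for every $q\in Q_h$, where $T^*:Q_h\to W_h$ is the adjoint with respect to the $H^1$ and $L^2$ inner products; in finite dimensions this does give surjectivity of $T$. But your proposed derivation of the bound $\norm{w}_{H^1}\le c_3^{-1}\norm{q}_{L^2}$ for the minimal-norm preimage $w\in(\ker T)^\perp$ --- ``applying (S3) to $q$ with this particular $w$ as a candidate in the supremum, after checking the supremum is attained there'' --- does not work. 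The supremum of $(\div v,q)/\norm{v}_{H^1}$ is attained at $v=T^*q$, which in general is \emph{not} proportional to the minimal-norm preimage of $q$: already for $T=\operatorname{diag}(1,2)$ on $\R^2$ with $q=(1,1)$ one has $T^*q=(1,2)$ while $w=(1,\tfrac12)$. Plugging your specific $w$ into the quotient only bounds the supremum from below, and the inequality you would need at that $w$, namely $\norm{q}_{L^2}^2\ge c_3\norm{w}_{H^1}\norm{q}_{L^2}$, is equivalent to the conclusion you are trying to prove, so the step as sketched is circular. Transferring the constant $c_3$ from the adjoint to a right inverse is the entire content of the nontrivial direction, not a finite-dimensional formality.

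The repair is the duality step the paper uses: (S3) says $T^*$ is injective with inverse bounded by $c_3^{-1}$ on its range, and since the restriction of $T$ to $(\ker T)^\perp$ is the adjoint of that bijection, it is bounded below by the same constant, giving a right inverse of $T$ bounded by $c_3^{-1}$. If you prefer an explicit construction, take $w=T^*(TT^*)^{-1}q$: then $Tw=q$, $w\in(\ker T)^\perp$, and since $(TT^*q',q')=\norm{T^*q'}_{H^1}^2\ge c_3^2\norm{q'}_{L^2}^2$ one gets $\norm{w}_{H^1}^2=((TT^*)^{-1}q,q)\le c_3^{-2}\norm{q}_{L^2}^2$. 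With either of these in place of your attainment argument, the proof is complete and is essentially the paper's adjoint argument.
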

\begin{proof}
Let $L_h=P_{Q_h}\div|_{W_h}:W_h\to Q_h$, and
let $L_h^*:Q_h\to W_h$ be its Hilbert space adjoint, where,
as norms on $W_h$ and $Q_h$
we use the $H^1$ and $L^2$ norms, respectively.  Note that
$$
\sup_{w\in W_h}\frac{(\div w,q)}{\norm{w}_{H^1}}
= \sup_{w\in W_h}\frac{(L_h w,q)_{Q_h}}{\norm{w}_{W_h}}
= \sup_{w\in W_h}\frac{( w,L_h^*q)_{W_h}}{\norm{w}_{W_h}}
= \norm{L_h^*q}_{W_h},
$$
so condition (S3) states that
$$
\norm{L_h^*q}_{W_h} \ge c_3\norm{q}_{Q_h}, \quad q\in Q_h,
$$
which is equivalent to stating that $L_h^*$ is an injective map of $Q_h$ onto a subspace of $W_h$
with inverse bounded by $c_3^{-1}$.  This in turn is equivalent to
the statement that $L_h$ is a surjective map of $W_h$ onto $Q_h$ and
admits a right-inverse bounded by $c_3^{-1}$, which is the desired
condition.\qed
\end{proof}

For the mixed Poisson equation, the inf-sup condition uses the
$H(\div)$ norm rather than the $H^1$ norm.  That is,
a pair of spaces $S_h\subset H(\div,\Omega,\R^2)$,
$U_h\subset L^2(\Omega)$ are required to satisfy the condition:
\begin{itemize}
\item[(S4)] There exists a positive constant $c_4$ such that
for each $q\in U_h$ there is a nonzero $w\in S_h$
with $(\div w,q)\ge c_4 \norm{w}_{H(\div)} \norm{q}_{L^2}$.
\end{itemize}
Again, there are numerous pairs of spaces known to satisfy (S4).
The next theorem gives the connection to mixed elasticity
elements.  It states that, if we choose a pair of spaces satisfying (S3) and another
satisfying (S4), and if the two choices satisfy the
compatibility condition  \eqref{compat} below, then we
obtain spaces satisfying (S2).
\begin{thm}\label{s2}
Suppose that $W_h\subset H^1(\Omega,\R^2)$ and $Q_h\subset L^2(\Omega)$
satisfy {\rm(S3)} and that $S_h\subset H(\div,\Omega,\R^2)$ and
$U_h\subset L^2(\Omega)$ satisfy {\rm(S4)}.  Suppose further
that
\begin{equation}\label{compat}
 \curl W_h\subset S_h\x S_h.
\end{equation}
Then
$\Sigma_h:= S_h\x S_h\subset H(\div,\Omega,\Mat)$
and $V_h:= U_h\x U_h\subset L^2(\Omega,\R^2)$ and $Q_h\subset L^2(\Omega)$ satisfy \rm{(S2)}.
\end{thm}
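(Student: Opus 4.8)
The plan is to carry out at the discrete level the construction used in Section~\ref{s:elas} to verify the continuous analogue of (S2), replacing the exact right inverses of the divergence used there by the approximate ones provided by (S3) and (S4). As a preliminary step I would record the analogue of Lemma~\ref{eqS3} for condition (S4): the identical adjoint argument, now carried out with the $H(\div)$ norm on $S_h$ in place of the $H^1$ norm, shows that (S4) is equivalent to the statement that for every $q\in U_h$ there is a $w\in S_h$ with $P_{U_h}\div w=q$ and $\norm{w}_{H(\div)}\le c_4^{-1}\norm{q}_{L^2}$, where $P_{U_h}$ is the $L^2$-projection onto $U_h$.

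Now fix $v=(v_1,v_2)\in V_h=U_h\x U_h$ and $q\in Q_h$. Applying this equivalent form of (S4) to each component produces $\eta^{(1)},\eta^{(2)}\in S_h$ with $P_{U_h}\div\eta^{(i)}=v_i$ and $\norm{\eta^{(i)}}_{H(\div)}\le c_4^{-1}\norm{v_i}_{L^2}$; taking $\eta\in\Sigma_h=S_h\x S_h$ to be the matrix field whose rows are $\eta^{(1)}$ and $\eta^{(2)}$ gives $P_{V_h}\div\eta=v$ and $\norm{\eta}_{H(\div)}\le C\norm{v}_{L^2}$. Since $q-P_{Q_h}\as\eta\in Q_h$, Lemma~\ref{eqS3} then furnishes $w\in W_h$ with $P_{Q_h}\div w=q-P_{Q_h}\as\eta$ and $\norm{w}_{H^1}\le c_3^{-1}\norm{q-P_{Q_h}\as\eta}_{L^2}$. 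I would set $\tau=\eta-\curl w$. The compatibility hypothesis \eqref{compat} is exactly what guarantees $\curl w\in S_h\x S_h=\Sigma_h$, so $\tau\in\Sigma_h$. Because $\div\curl w=0$ one has $P_{V_h}\div\tau=P_{V_h}\div\eta=v$, hence $(\div\tau,v)=\norm{v}_{L^2}^2$; and because $\as\curl w=-\div w$, using $q\in Q_h$ to move projections across the inner product, one computes $(\as\tau,q)=(\as\eta,q)+(\div w,q)=(P_{Q_h}\as\eta,q)+(q-P_{Q_h}\as\eta,q)=\norm{q}_{L^2}^2$. So $(\div\tau,v)+(\as\tau,q)=\norm{v}_{L^2}^2+\norm{q}_{L^2}^2$.

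It remains to bound $\norm{\tau}_{H(\div)}$ from above by a multiple of $\norm{v}_{L^2}+\norm{q}_{L^2}$. Since $\div\curl w=0$, one has $\norm{\curl w}_{H(\div)}=\norm{\curl w}_{L^2}\le\norm{w}_{H^1}\le c_3^{-1}(\norm{q}_{L^2}+\norm{\as\eta}_{L^2})\le c_3^{-1}(\norm{q}_{L^2}+2\norm{\eta}_{H(\div)})$, and combining this with the bound on $\norm{\eta}_{H(\div)}$ gives $\norm{\tau}_{H(\div)}\le\norm{\eta}_{H(\div)}+\norm{\curl w}_{H(\div)}\le C'(\norm{v}_{L^2}+\norm{q}_{L^2})$ for a constant $C'$ depending only on $c_3$, $c_4$, and $\Omega$. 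Then $(\div\tau,v)+(\as\tau,q)=\norm{v}_{L^2}^2+\norm{q}_{L^2}^2\ge\tfrac12(\norm{v}_{L^2}+\norm{q}_{L^2})^2\ge(2C')^{-1}\norm{\tau}_{H(\div)}(\norm{v}_{L^2}+\norm{q}_{L^2})$, which is (S2) with $c_2=(2C')^{-1}$ (in the trivial case $v=0$, $q=0$ one simply takes $\tau$ to be any nonzero element of $\Sigma_h$).

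I do not expect a genuine obstacle: the argument is essentially forced once one observes that the exact identities $\div\tau=v$ and $\as\tau=q$ of the continuous proof need only hold after projection onto $V_h$ and $Q_h$, since those are the spaces from which the test functions in (S2) are drawn, and that \eqref{compat} is precisely the hypothesis needed to keep $\curl w$ inside $\Sigma_h$. The points that require care are the bookkeeping with the projections $P_{V_h}$ and $P_{Q_h}$ — in particular the identities $(\as\eta,q)=(P_{Q_h}\as\eta,q)$ and $(\div w,q)=(P_{Q_h}\div w,q)$, which use $q\in Q_h$ — and checking the sign in $\as\curl w=-\div w$ against the paper's conventions for $\curl$ on scalar and vector fields. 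Uniformity of $c_2$ in the Lam\'e parameter $\lambda$ is automatic, since $c_2$ depends only on $c_3$, $c_4$, and $\Omega$ and not at all on $A$.
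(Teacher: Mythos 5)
Your proof is correct and follows essentially the same route as the paper: use (S4) (via the $H(\div)$ analogue of Lemma~\ref{eqS3}) to build $\eta\in\Sigma_h$ controlling the divergence, then apply Lemma~\ref{eqS3} to $q-P_{Q_h}(\as\eta)$ and set $\tau=\eta-\curl w$, with \eqref{compat} keeping $\curl w$ in $\Sigma_h$ and $\as(\curl w)=-\div w$ giving $(\as\tau,q)=\norm{q}_{L^2}^2$. The only differences are cosmetic amplifications (making the (S4) right-inverse form explicit and spelling out the final arithmetic converting the construction into the inequality of (S2)), which the paper leaves implicit.
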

\begin{proof}
Let $v\in V_h$, $q\in Q_h$ be given.  Since $\Sigma_h= S_h\x S_h$ and $V_h= U_h\x U_h$,
(S4) implies that there exists $\eta\in\Sigma_h$ such that
$$
(\div\eta,v)=\norm{v}_{L^2}^2,\quad \norm{\eta}_{H(\div)} \le c_4^{-1}\norm{v}_{L^2}.
$$
Next we invoke (S3) with $q$ replaced by $q-P_{Q_h}(\as\eta)$. By Lemma \ref{eqS3}, there exists $w\in W_h$ such that
$$
P_{Q_h}(\div w)= q-P_{Q_h}(\as\eta),\quad
\norm{w}_{H^1} \le c_3^{-1}(\norm{q}_{L^2} + \norm{\eta}_{L^2}).
$$
Set
$$
\tau=\eta-\curl w\in\Sigma_h.
$$
Then
$$
(\div\tau,v) = (\div\eta,v) = \norm{v}_{L^2}^2.
$$
Also, since $\as(\curl w) = -\div w$,
\begin{align*}
 (\as\tau,q)&= (\as\eta,q) +(\div w,q) \\
 &= (P_{Q_h}(\as\eta),q)+(q-P_{Q_h}(\as\eta), q) = \norm{q}_{L^2}^2,
\end{align*}
and
$$
\norm{\tau}_{H(\div)} \le \norm{\eta}_{H(\div)} +\norm{w}_{H^1}\le C(\norm{v}_{L^2}
+\norm{q}_{L^2}),
$$
where $C$ depends only on $c_3$ and $c_4$.  This completes the verification of (S2).
\qed
\end{proof}

\subsection{The stability condition (S1)}\label{ss:s1}
The key to obtaining (S1)
will be the construction of the finite element spaces $\Sigma_h$ and $V_h$ from
shape function spaces $\hat\Sigma\subset H(\div,\hat K,\Mat)$ and
$\hat V\subset L^2(\hat K,\R^2)$ on a reference element $\hat K$, which
are transformed to a general element 
using appropriate transformations.  We define these transformations now
and summarize their main properties in Lemma~\ref{transint} below.
Based on these we establish (S1) in Theorem~\ref{s1}.

Suppose that $F_K:\hat K \to K$ is a diffeomorphism of bounded domains in the
plane.  (In the applications
in the next section, $\hat K$ will be the unit square and $F_K$ will be an invertible
bilinear map onto a convex quadrilateral $K$.)  A scalar- or vector-valued function
$\hat q$ on $\hat K$ transforms to a function $P_K^0\hat q$ on $K$ by composition:
$$
P_K^0 \hat q(x) = \hat q(\hat x),
$$
where $x=F_K(\hat x)$.
A different way to transform a scalar- or vector-valued function brings in the Jacobian determinant
$J_K=\det\grad F_K$:
$$
P_K^2\hat q(x) = \frac{1}{J_K(\hat x)}\hat q(\hat x).
$$
The notation refers to exterior calculus: $P_K^0$ corresponds to pull back by $F_K^{-1}$ if we
think of $\hat q$ as a differential $0$-form on $\hat K$, and $P_K^2$ corresponds to pull back as
a $2$-form.
A third way to transform a vector-valued function is to treat it as a $1$-form, i.e., to use
the Piola transform:
\begin{equation} \label{piola}
P_K^1 \hat{v}(x) = \frac{1}{J_K(\hat x)} [\grad F_K(\hat x)] \hat{v}(\hat x).
\end{equation}
We can also transform a matrix-valued function
on $\hat K$ to one on $K$ by applying the Piola transform to each row.  This transformation will
also be denoted by $P_K^1$.  We have the following fundamental identities.
\begin{lemma}
\begin{equation}\label{transprop}
\curl P_K^0\hat v  =P_K^1(\curl\hat v), \quad
\div P_K^1\hat \tau = P_K^2(\div\hat \tau),
\end{equation}
and
\begin{equation}\label{transint}
(P_K^2 \hat q, P_0 \hat v)_{L^2(K)} = (\hat q, \hat v)_{L^2(\hat K)} , \quad
 (\div P_K^1\hat \tau, P_K^0\hat v)_{L^2(K)} = (\div\hat\tau,\hat v)_{L^2(\hat K)}.
\end{equation}

\end{lemma}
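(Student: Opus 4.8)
The plan is to verify the four identities in \eqref{transprop} and \eqref{transint} directly from the definitions of the transforms $P_K^0$, $P_K^1$, $P_K^2$ and the chain rule, treating each as a standard fact about pullbacks of differential forms. For the identity $\curl P_K^0\hat v = P_K^1(\curl\hat v)$, I would first recall that in two dimensions $\curl q = (\partial_2 q, -\partial_1 q)$ for a scalar $q$ is, up to a rotation by $\pi/2$, the same as $\grad q$; equivalently, the $1$-form $(\curl q)\cdot dx$ is $dq$ composed with the Hodge star. Thus $\curl$ of a $0$-form is (a rotation of) the exterior derivative, and the identity $\curl P_K^0\hat v = P_K^1(\curl\hat v)$ is precisely the naturality of $d$ under pullback, $d(F_K^*\hat v) = F_K^*(d\hat v)$, rewritten with the vector-proxy conventions $P_K^0$ for $0$-forms and $P_K^1$ for $1$-forms (the Jacobian factor and the matrix $\grad F_K$ in \eqref{piola} being exactly the proxy form of the cotangent pullback in $2$D). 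Concretely, I would compute $\partial_{x_j}[\hat v(F_K^{-1}(x))]$ by the chain rule, assemble the two components of the curl, and recognize the resulting expression as $J_K^{-1}[\grad F_K]$ applied to $\widehat{\curl}\,\hat v$, using the $2\times2$ cofactor identity $[\grad F_K]^{-T} = J_K^{-1}\,\mathrm{cof}(\grad F_K)$ together with the fact that in $2$D the cofactor matrix equals $R^T(\grad F_K)R$ for the rotation $R$ implementing $(a,b)\mapsto(b,-a)$. For matrix fields the identity follows by applying the vector case row-by-row.

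For the divergence identity $\div P_K^1\hat\tau = P_K^2(\div\hat\tau)$, the cleanest route is to note that a vector field transforms as a $1$-form under $P_K^1$ iff the associated $(n-1)$-form transforms as an $(n-1)$-form, and $\div$ corresponds to the exterior derivative mapping $(n-1)$-forms to $n$-forms; in $2$D this is again the naturality of $d$, now with $P_K^1$ on $1$-forms and $P_K^2$ on $2$-forms. Rather than invoking exterior calculus abstractly, I would give the direct computation: writing $\hat v = \hat\tau_i$ for a single row, one differentiates $P_K^1\hat v(x) = J_K(\hat x)^{-1}[\grad F_K(\hat x)]\hat v(\hat x)$ using the chain rule and the Piola identity $\div_x(J_K^{-1}[\grad F_K]\hat v) = J_K^{-1}\,\widehat{\div}\,\hat v$, which is the classical contracted Jacobi/Piola formula (the key ingredient being $\sum_j \partial_{\hat x_j}(J_K^{-1}(\grad F_K)_{kj}) = 0$, the Piola identity for the cofactor matrix). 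Again the matrix case is just the vector case applied row-by-row, since both $\div$ and $P_K^1$ act row-by-row.

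The integral identities in \eqref{transint} follow from the change-of-variables formula $\int_K g(x)\,dx = \int_{\hat K} g(F_K(\hat x))\,J_K(\hat x)\,d\hat x$. For the first, $(P_K^2\hat q, P_K^0\hat v)_{L^2(K)} = \int_K J_K^{-1}\hat q(\hat x)\,\hat v(\hat x)\,dx = \int_{\hat K} J_K^{-1}\hat q\,\hat v\,J_K\,d\hat x = (\hat q,\hat v)_{L^2(\hat K)}$, the two Jacobian factors cancelling (here I read $P_0$ in the statement as $P_K^0$). The second integral identity is then immediate by combining this with the already-established $\div P_K^1\hat\tau = P_K^2(\div\hat\tau)$: $(\div P_K^1\hat\tau, P_K^0\hat v)_{L^2(K)} = (P_K^2(\div\hat\tau), P_K^0\hat v)_{L^2(K)} = (\div\hat\tau, \hat v)_{L^2(\hat K)}$.

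The only genuine obstacle is the differential identity $\div P_K^1\hat\tau = P_K^2(\div\hat\tau)$ — more precisely, establishing the Piola identity $\sum_j \partial_{\hat x_j}(\mathrm{cof}(\grad F_K)_{kj}) = 0$ and handling the chain-rule bookkeeping cleanly. Everything else is either the chain rule, a $2\times2$ linear-algebra identity relating the inverse-transpose to the cofactor matrix, or a one-line change of variables. I would therefore concentrate the exposition on that computation (or simply cite it as the standard Piola transform property, as in \cite{BBF-book}), and dispatch the curl identity, the row-by-row extensions, and the two integral identities in a sentence or two each.
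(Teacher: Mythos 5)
Your proposal is correct and follows exactly the two routes the paper itself cites: it observes that both differential identities are instances of the naturality of the exterior derivative under pullback (the paper's ``follow naturally in exterior calculus'') and also supplies the elementary chain-rule/Piola-identity/change-of-variables computations (the paper's ``verified by elementary vector calculus''), merely filling in details the paper leaves to the reader; reading $P_0$ as $P_K^0$ is the right correction of an obvious typo.
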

\begin{proof}
The above relationships follow naturally
in exterior calculus, or can be verified by elementary vector calculus.  
\qed
\end{proof}

Now, let $\hat K\subset\R^2$ be a fixed reference element (e.g., the unit square),
and suppose that $\T_h$ is a partition
of $\Omega$ into finite elements such that for each $K\in\T_h$ there 
is given a diffeomorphism $F_K$ of $\hat K$ onto $K$. Suppose we are given a reference
shape function space $\hat V\subset L^2(\hat K,\R^2)$ and that the finite element
space $V_h$ is defined by
\begin{equation}\label{vdef}
V_h = \{ \, v \in L^2(\Omega,\R^2)\,:\, v|_K \in P_K^0 \hat{V},\  \forall K \in \mathcal{T}_h \, \}.
\end{equation}
Further assume given a reference shape function space $\hat\Sigma\subset H(\div,\hat K,\Mat)$
and suppose that the finite element space $\Sigma_h$ satisfies
\begin{equation}\label{sigdef}
\Sigma_h = \{\,\tau\in H(\div,\Omega,\Mat)\,:\, \tau|_K \in P_K^1 \hat\Sigma, \ \forall K\in \T_h \,\}.
\end{equation}
Finally, assume that the shape function spaces are related by the inclusion
\begin{equation}\label{incl}
\div\hat\Sigma \subset \hat V.
\end{equation}
These conditions imply (S1).
\begin{thm}\label{s1}
 If the shape function spaces $\hat V$ and $\hat\Sigma$ satisfy \eqref{incl}, and the finite
element spaces $V_h$ and $\Sigma_h$ are defined by \eqref{vdef} and \eqref{sigdef}, then {\rm(S1)} holds.
\end{thm}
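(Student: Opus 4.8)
The plan is to reduce the global condition (S1) to a purely local, element-by-element statement on the reference element $\hat K$, and then dispose of the reference-level statement by a compactness/dimension-counting argument. Recall that (S1) asks that $\norm{\tau}_{H(\div)}\le c_1(A\tau,\tau)^{1/2}$ for all $\tau\in\Sigma_h$ with $(\div\tau,v)=0$ for every $v\in V_h$ and $(\as\tau,q)=0$ for every $q\in Q_h$. First I would discard the condition on $\as\tau$ entirely: it is not needed for (S1) — indeed the argument should produce $c_1$ depending only on the reference data and on the shape-regularity of the mesh. So the real content is: if $\tau\in\Sigma_h$ and $(\div\tau,v)=0$ for all $v\in V_h$, then $\div\tau=0$, whence $\norm{\tau}_{H(\div)}=\norm{\tau}_{L^2}$, and then uniform positive-definiteness of $A$ (with a $\lambda$-robust refinement via the deviatoric estimate as in Section~\ref{s:elas}) finishes the bound.

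The crucial step is therefore to show that the divergence constraint against $V_h$ forces $\div\tau=0$. Here I would use the transformation machinery of Lemma~\ref{transint} element by element. Fix $K\in\T_h$ and write $\tau|_K = P_K^1\hat\tau$ with $\hat\tau\in\hat\Sigma$. By \eqref{transprop}, $\div(\tau|_K) = P_K^2(\div\hat\tau)$, and by \eqref{incl} we have $\div\hat\tau\in\hat V$. Now choose any $\hat v\in\hat V$; the function on $\Omega$ equal to $P_K^0\hat v$ on $K$ and zero elsewhere lies in $V_h$ by \eqref{vdef} (it is in $L^2(\Omega,\R^2)$ — no interelement continuity is required). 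Testing the constraint against this function and applying the second identity in \eqref{transint} gives
\begin{equation*}
0 = (\div\tau, v)_{L^2(\Omega)} = (\div(\tau|_K), P_K^0\hat v)_{L^2(K)} = (\div\hat\tau, \hat v)_{L^2(\hat K)}.
\end{equation*}
Since this holds for every $\hat v\in\hat V$ and $\div\hat\tau\in\hat V$, we may take $\hat v=\div\hat\tau$ and conclude $\div\hat\tau=0$ on $\hat K$. Then $\div(\tau|_K)=P_K^2(\div\hat\tau)=0$, and since $K$ was arbitrary, $\div\tau=0$ on all of $\Omega$.

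With $\div\tau=0$ in hand, $\norm{\tau}_{H(\div)}^2 = \norm{\tau}_{L^2}^2$. The uniform positive-definiteness of the extended compliance tensor $A$ on $\Mat$ gives $\norm{\tau}_{L^2}\le c\,(A\tau,\tau)^{1/2}$ immediately, establishing (S1) with a constant independent of the mesh. For a $\lambda$-robust constant one argues exactly as at the continuous level in Section~\ref{s:elas}: use $(A\tau,\tau)\ge(2\mu)^{-1}\norm{\tau^D}_{L^2}^2$ together with $\norm{\tau}_{L^2}\le c\norm{\tau^D}_{L^2}$ for divergence-free $\tau$ with $\int_\Omega\tr\tau\,dx=0$, the mean-trace constraint being enforced on $\sigma$ by taking $\tau=I$ as a test function in the discrete equations (note $I\in\Sigma_h$ provided the reference space $\hat\Sigma$ contains the constants, which will hold for the elements constructed in Section~\ref{s:elts}). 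The only mild obstacle is this last bookkeeping about $\lambda$-robustness and the membership $I\in\Sigma_h$; the core implication $\div\tau=0$ is essentially forced by \eqref{incl} and the compatibility of the transforms, and is the heart of the proof.
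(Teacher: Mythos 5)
Your argument is correct and follows essentially the same route as the paper: test the divergence constraint with the function equal to $P_K^0(\div\hat\tau)$ on a single element $K$ and zero elsewhere, use \eqref{incl} and the identities \eqref{transprop}, \eqref{transint} to conclude $\div\hat\tau=0$ hence $\div\tau=0$, and then invoke positive definiteness of $A$ (the $\lambda$-robust refinement you sketch is exactly the paper's Remark~1). The ``compactness/dimension-counting'' device announced in your opening plan is never needed or used, so you may simply delete that sentence.
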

\begin{proof}
 It is certainly sufficient to prove
that if $\tau\in \Sigma_h$ and $(\div\tau,v)=0$ for all $v\in V_h$, then $\div\tau=0$.
Indeed, this property implies (S1). 

Pick $K\in\T_h$ and set $\hat\tau=(P_K^1)^{-1}(\tau|_K)$,
$\hat v = \div\hat\tau$.  By \eqref{sigdef}, $\hat\tau\in \hat\Sigma$, and by \eqref{incl},
$\hat v\in \hat V$.  Define $v\in L^2(\Omega,\R^2)$ by $v|_K = (P_K^0)^{-1}\hat v$,
and $v\equiv0$ on $\Omega\setminus K$.  By \eqref{vdef}, $v\in V_h$,  and so, by assumption,
$(\div\tau,v)=0$. Using \eqref{transint},
$$
(\div\tau,v)=(\div\tau|_K,v|_K)_{L^2(K)} = (\div\hat\tau,\hat v)_{L^2(\hat K)} = \norm{\div\hat\tau}_{L^2(\hat K)}^2.
$$
Thus $\div\hat\tau=0$, and so, with \eqref{transprop} $\div\tau|_K = P_K^2(\div\hat \tau)=0$.  Since $K$ was arbitrary,
this shows that $\div\tau$ vanishes, as desired.
\qed
\end{proof}

\begin{rem}
 This argument leads to the constant $c_1$ in (S1) depending on both $\lambda$ and $\mu$.
 Just as for the continuous case discussed at the end of Section~2, a slightly more
 elaborate argument shows that $c_1$ can be taken independent of $\lambda$. 
 For this we need to choose the test function $\tau$ equal to the constant identity
 matrix in order to show that the $\sigma_h$ satisfies the constraint
 $\int\tr\sigma_h\,dx=0$.  Thus we have to check that the constant identity matrix
 field belongs to $\Sigma_h$.  From the definition \eqref{sigdef} this means
 checking that $(P^1_K)^{-1}I\in \hat \Sigma$, i.e., that
 $J_K(\hat x)[\grad F_K(\hat x)]^{-1}\in\hat\Sigma$.    Now $J_K(\hat x)[\grad F_K(\hat x)]^{-1}$
 is the transposed matrix of cofactors of the Jacobian matrix $\grad F_K(\hat x)$.  Since the components
 of $F_K(\hat x)$ are bilinear, the cofactors are linear polynomials.    Thus, as
 long as the reference space function space $\hat \Sigma$ contains the space $\P_1(\hat K,\Mat)$,
 then Theorem 2 results in (S1) holding with constant $c_1$ independent of $\lambda$,
 and the resulting mixed method will not
 exhibit locking for nearly incompressible materials.  This is the case for all of the choices of
 $\hat\Sigma$ we make below.
\end{rem}

\section{Stable elements for elasticity}\label{s:elts}
Theorems~\ref{s2} and \ref{s1} give strong guidance on the construction of stable
spaces $\Sigma_h$, $V_h$, and $Q_h$ for elasticity.  First, we require spaces $W_h$, $Q_h$,
$S_h$, $U_h$ which satisfy the hypotheses of Theorem~\ref{s2}, i.e., the first two
form a stable pair for the Stokes equations and the latter two a stable pair for the
mixed Poisson equation, and the compatibility condition \eqref{compat} is satisfied.
In order that the hypothesis of Theorem~\ref{s1} are also met, we will construct
these four spaces starting with shape functions on a reference element using
appropriate transformations.  Finally, we take $\Sigma_h=S_h\x S_h$, $V_h=U_h\x U_h$,
and $Q_h$ as our elements for the stress, displacement, and rotation.  Note
that the space $W_h$ (the Stokes velocity space) is only used for the analysis,
and does not enter the mixed method for elasticity.

We henceforth denote by $\hat K$ the unit square, and we assume that the partition
$\T_h$ of $\Omega$ consists of convex quadrilaterals $K$, and that each $F_K$
is a bilinear isomorphism of $\hat K$ onto $K$.
We assume that $\T_h$ is shape regular in the sense of \cite[p.~105]{Girault-Raviart}. To
define this, we consider for each convex quadrilateral the four triangles obtained by connecting three
of its vertices and let $\rho_K$ be the smallest of the diameters of the corresponding inscribed circles.
A sequence of quadrilateral meshes is shape regular if there is a constant
$\sigma$ such that $\operatorname{diam}(K)/\rho_K\le \sigma$ for
all the elements in the meshes.

\subsection{A first choice of elements}
\label{subsection_space1}
Let $\P_r$ denote the space of polynomials of degree at most $r$, and $\P_{r,s}$
the space of polynomials of degree at most $r$ in $x_1$ and $s$ in $x_2$.
We write $\Q_r$ for $\P_{r,r}$, and $\RT_r = \P_{r,r-1}\x\P_{r-1,r}$.  The
last space consists of the shape functions for the Raviart--Thomas space on a square.
For $K\subset\R^2$ we write $\P_r(K)$ for functions on $K$ obtained by restriction
of polynomials in $\P_r$, and use a similar notation for the other spaces.

For our first choice of elements,
the vector-valued finite element spaces $W_h$ and $S_h$ will be constructed
starting from reference shape function spaces:
$$
\hat W = \Q_2(\hat K)\x\Q_2(\hat K), \quad
\hat S = \RT_2(\hat K).
$$
These satisfy
\begin{equation}\label{curl}
\curl \hat W \subset \hat S\x \hat S.
\end{equation}
We then set
\begin{align}\label{defwh}
 W_h &= \{\, w\in H^1(\Omega,\R^2)\,:\, w|_K\in P_K^0 \hat W,\ \forall K\in\T_h\,\},
\\ \label{defsh}
 S_h &= \{\, s\in H(\div,\Omega,\R^2)\,:\, s|_K\in P_K^1 \hat S,\ \forall K\in\T_h\,\}.
\end{align}
Note that the transform $P_K^0$ is used to define $W_h$, but
the Piola transform $P_K^1$ is used in the definition of $S_h$.
Using \eqref{curl} and the first property in \eqref{transprop} of the transformations, we
see that the crucial compatibility condition \eqref{compat}
is 
satisfied.

The scalar-valued space $U_h$ is also defined starting with reference shape
functions.  We choose $\hat U=\Q_1$ and define
\begin{equation}\label{defuh}
 U_h = \{\, u\in L^2(\Omega)\,:\, u|_K\in P_K^0 \hat U,\ \forall K\in\T_h\,\}.
\end{equation}
In contrast, the scalar-valued space $Q_h$ is defined directly using polynomials on
the elements of $\T_h$ with no interelement continuity:
\begin{equation*}
 Q_h = \{\, q\in L^2(\Omega)\,:\, q|_K\in \P_1(K),\ \forall K\in\T_h\,\}.
\end{equation*}

Each of the spaces $W_h$, $Q_h$, $S_h$, $U_h$ has a standard set of degrees of freedom which enforce
the desired degree of continuity for the assembled spaces $W_h$, $Q_h$, $S_h$,
and $U_h$.  For $\hat W$ the degrees of freedom are the values of both components at the vertices
of the square, the integral of both components on the edges, and the integral
of both components over the square.  For $\hat S$ they are the averages
and first moments of the normal component on each edge and the interior moments weighted
by $\P_{0,1}\x\P_{1,0}$.  For $\hat Q$ and $\hat U$ all the degrees
of freedom are interior.
Figure~\ref{dofs1} illustrates the degrees of freedom for the four spaces, and
also includes an indication of how the shape functions transform to the
reference element for each space.  Note that the functions in $W_h$ are vector fields,
so each of the dots in the corresponding diagram represent two degrees of freedom.
\begin{figure}[htb]
\centerline{%
\begin{tabular}{cccc}
 $W_h$ ($P_K^0$)  & $Q_h$ (unmapped) & $S_h$ ($P_K^1$) & $U_h$ ($P_K^0$) 
\\
\includegraphics[width=30mm]{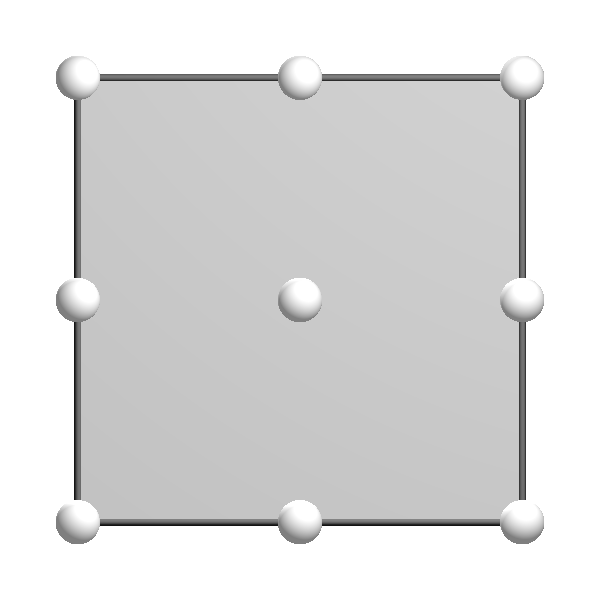} &
\includegraphics[width=30mm]{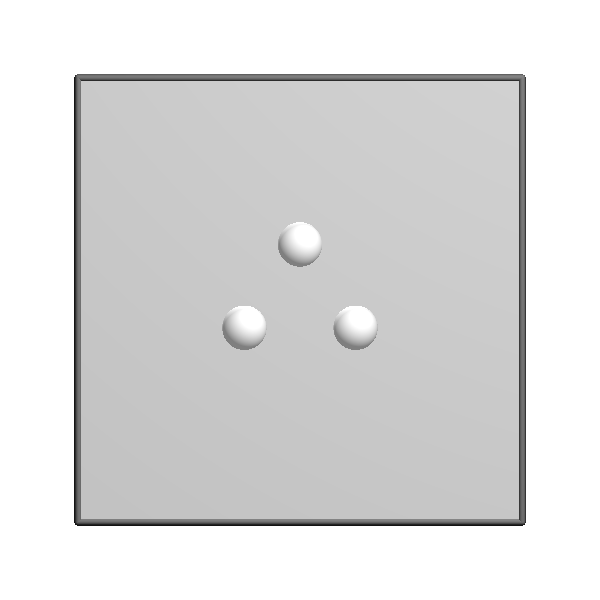} &
\includegraphics[width=30mm]{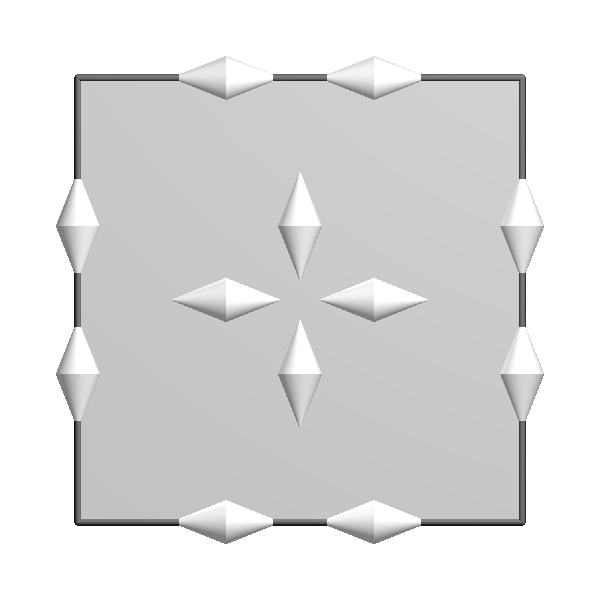} &
\includegraphics[width=30mm]{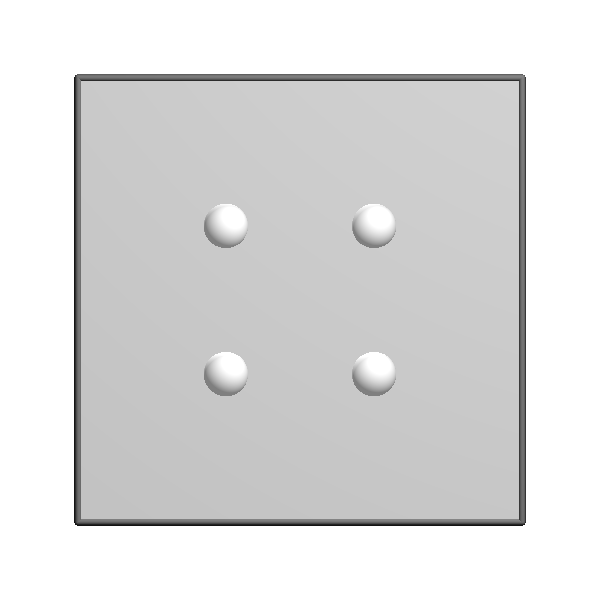}
\\
$\Q_2\x \Q_2 $& $\P_1$ & $\RT_2$ & $\Q_1$
\\
\multicolumn{2}{c}{$\underbrace{\hspace{55mm}}_{\text{\normalsize Stokes}}$} &
\multicolumn{2}{c}{$\underbrace{\hspace{55mm}}_{\text{\normalsize mixed Poisson}}$}
\end{tabular}}
\caption{Degrees of freedom and transformations
used to construct the first elements.}\label{dofs1}
\end{figure}

The Stokes pair $W_h$, $Q_h$ is a standard Stokes element, the $Q_2$--$P_1$
element, for which the inf-sup condition (S3) is well known.
See \cite[Chapter~II, \S3.2]{Girault-Raviart}.
The mixed Poisson pair $S_h$, $U_h$ is a standard choice as well, the quadrilateral
Raviart--Thomas elements of second lowest order.  A proof of the inf-sup condition (S4)
for general quadrilateral meshes is given, e.g., in \cite{ABF2005}.  We have thus verified the hypotheses of Theorem~\ref{s2}.
Therefore if we define $\Sigma_h=S_h\x S_h$ and $V_h=U_h\x U_h$, the triple
$\Sigma_h$, $V_h$, $Q_h$ satisfies (S2).

From the definitions of $S_h$ and $U_h$, it follows that \eqref{vdef} holds with $\hat V = \hat U\x \hat U$
and \eqref{sigdef} holds with $\hat \Sigma=\hat S\x\hat S$.  Since $\div \hat S\subset \hat U$,
\eqref{incl} holds.  Theorem~\ref{s1} thus applies, showing that the spaces
 $\Sigma_h$, $V_h$, $Q_h$ satisfy (S1) as well.  Thus we have indeed constructed
a stable triple of spaces for the elasticity problem, satisfying the stability
condition (S0) and therefore the quasioptimality estimate \eqref{quasiopt}.
The diagram for the elements are shown in Figure~\ref{elts1}.
\begin{figure}[htb]
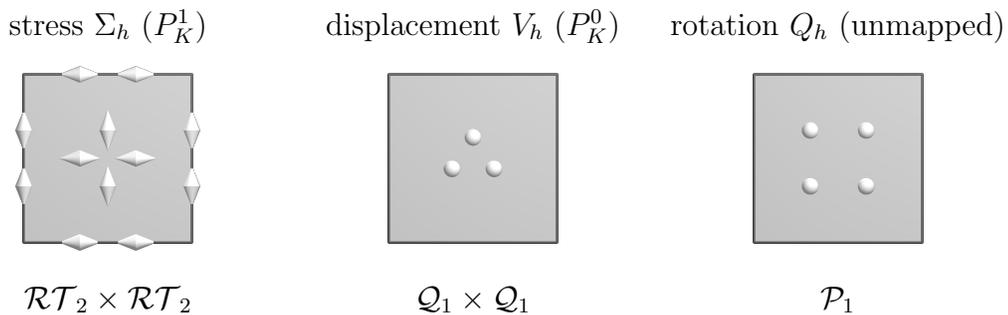

\centerline{%
\begin{tabular}{>{\centering\arraybackslash}p{45mm}>{\centering\arraybackslash}p{45mm}>{\centering\arraybackslash}p{45mm}}
 stress $\Sigma_h$ ($P_K^1$) & displacement $V_h$ ($P_K^0$) & rotation $Q_h$ (unmapped)
\\
\includegraphics[width=30mm]{rtsquare2d2.png} &
\includegraphics[width=30mm]{p2d1.png} &
 \includegraphics[width=30mm]{dgsquare2d1.png}
\\
$\RT_2\x\RT_2$ & $\Q_1\x \Q_1 $& $\P_1$ 
\end{tabular}}
\caption{The first choice of elasticity elements.}\label{elts1}
\end{figure}

\subsection{Higher order elements}
\label{subsection_space2}
The above elements generalize directly to arbitrary order $r\ge 2$.  For the Stokes element
we use $\Q_r$-$\P_{r-1}$, and for the mixed Poisson element we use
$\RT_r$-$\Q_{r-1}$.

\subsection{A simpler element}
\label{subsection_space3}
In this section we derive a simpler element.  The stress is approximated by
the lowest order quadrilateral BDM elements, which is constructed from
an $8$-dimensional space $\BDM_1$ of reference shape functions, spanned by $\P_1$
vector fields together with the two vector fields $\curl \hat x_1^2 x_2$
$\curl\hat x_1 x_2^2$.   The displacement and rotation
spaces simply consist of piecewise constants.  This element is thus a quadrilateral
analogue of the simple triangular finite element for elasticity with weak
symmetry introduced in \cite{AFW2006} and \cite{AFW2007}.
The elasticity element is summarized in Figure~\ref{elts2}.
\begin{figure}[htb]
\centerline{%
\begin{tabular}{>{\centering\arraybackslash}p{45mm}>{\centering\arraybackslash}p{45mm}>{\centering\arraybackslash}p{45mm}}
 stress $\Sigma_h$ ($P_K^1$) & displacement $V_h$ & rotation $Q_h$
\\
\includegraphics[width=30mm]{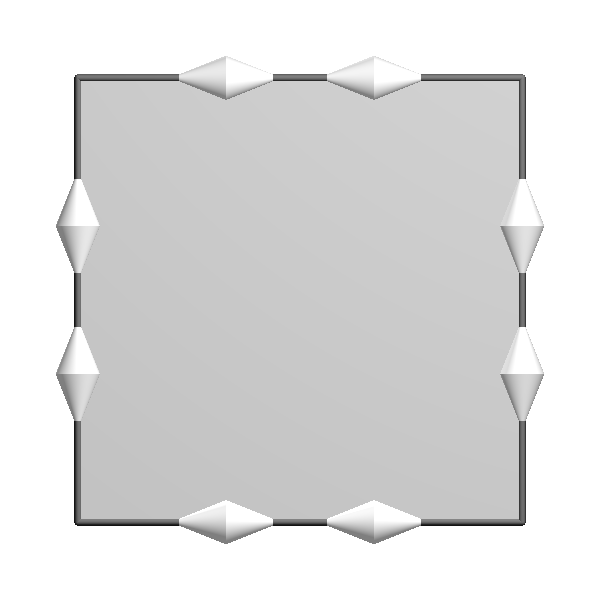} &
\includegraphics[width=30mm]{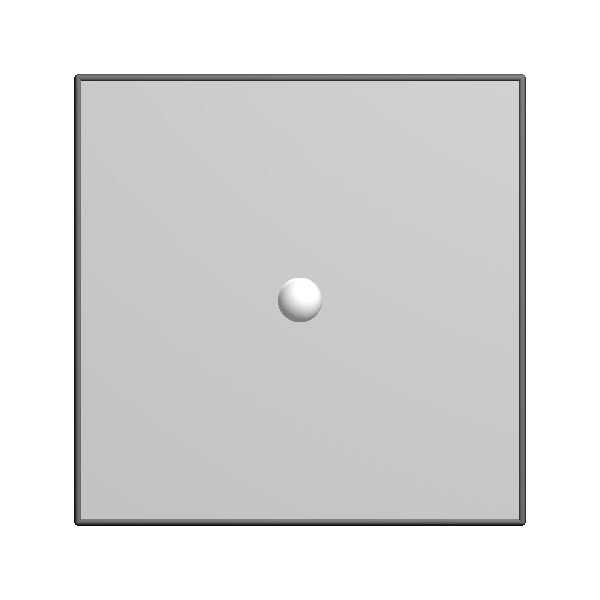} &
\includegraphics[width=30mm]{X2square2d0.png}
\\
$\BDM_1\x\BDM_1$ & $\P_0\x\P_0$& $\P_0$ 
\end{tabular}}
\caption{A simple stable choice of elasticity elements.}\label{elts2}
\end{figure}

Note that the mixed Poisson gradient space is based on $\BDM_1$
rather than $\RT_2$ as in the first element.
For analysis, we define the Stokes velocity space using the serendipity space $\SS_2$
instead of $\Q_2$.  The space of serendipity polynomials $\SS_r$ is defined
to be the span of $\P_r$ and the two polynomials $x_1^r x_2$ and $x_2 x_2^r$,
and the space $\BDM_r$ is the span of $\P_r\x\P_r$ and the two vector fields
$\curl x_1^{r+1} x_2$ and $\curl x_1 x_2^{r+1}$.
Thus, for this element, the reference shape functions are 
$$
\hat W = \SS_2(\hat K)\x\SS_2(\hat K), \quad
\hat S = \BDM_1(\hat K), \quad \hat U=\P_0(\hat K),
$$
and the spaces $W_h$, $S_h$ and $U_h$ are then defined by \eqref{defwh}, \eqref{defsh},
\eqref{defuh}.  Note that the crucial compatibility condition
$\curl\hat W\subset \hat S\x\hat S$ again holds.  The remaining space is
\begin{equation*}
 Q_h = \{\, q\in L^2(\Omega)\,:\, q|_K\in \P_0(K),\ \forall K\in\T_h\,\}.
\end{equation*}
Since constants on the reference element map by $P_K^0$ to constants on the
element $K$, for this element $Q_h$ and $U_h$ coincide, and are simply the space of
piecewise constant functions.  The element diagrams for these auxiliary spaces
are given in Figure~\ref{dofs2}.
\begin{figure}[htb]
\centerline{%
\begin{tabular}{cccc}
 \includegraphics[width=30mm]{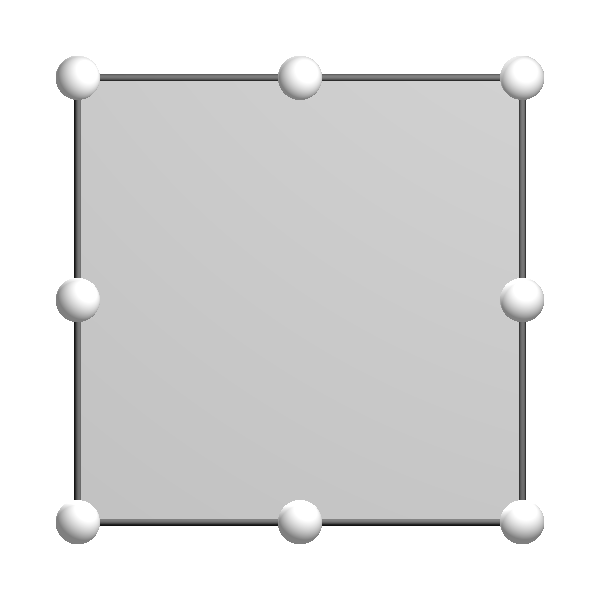} &
\includegraphics[width=30mm]{X2square2d0.png} &
\includegraphics[width=30mm]{X1square2d1.png} &
\includegraphics[width=30mm]{X2square2d0.png}
\\
$\SS_2\x\SS_2 $& $\P_0$ & $\BDM_1$ & $\P_0$
\\
\multicolumn{2}{c}{$\underbrace{\hspace{55mm}}_{\text{\normalsize Stokes}}$} &
\multicolumn{2}{c}{$\underbrace{\hspace{55mm}}_{\text{\normalsize mixed Poisson}}$}
\end{tabular}}
\caption{Degrees of freedom used to construct the second elements.}\label{dofs2}
\end{figure}
This Stokes element is the one referred to as $Q^{(8)}_2$--$P_0$ in \cite{Fortin81},
for which it is easy to prove stability using
the edge degrees of freedom.  This is discussed in \cite{Fortin81}, where
it is shown  the inf-sup condition (S3) holds (on general quadrilateral
meshes) for a variant of the
element ($R^{(8)}_2$--$P_0$) which uses the same pressure space and a smaller
velocity space.  This of course implies the inf--sup condition with
the larger velocity space.  The $BDM_1$--$P_0$ element is a standard
stable mixed finite element for the Poisson equation.  Its stability
on general quadrilateral meshes is shown, for instance, in \cite{ABF2005}.
Thus all the hypotheses of Theorems~\ref{s2} and \ref{s1} are again met,
and the choice $\Sigma_h=S_h\x S_h$, $V_h=U_h\x U_h$, and $Q_h$ give
a stable element for elasticity.

\section{$L^2$ estimates and rates of convergence}\label{s:rates}

The rate of convergence that can be deduced from the quasioptimal error estimate
\eqref{quasiopt} is limited by the approximation properties of the finite element space
$\Sigma_h$ in the $H(\div)$ norm.  We can demonstrate higher rates of convergence by
establishing a bound in the $L^2$ norm, as we do in this section.

In order to obtain an estimate in $L^2(\Omega,\Mat) \x L^2(\Omega,\mathbb{R}^2)\x L^2(\Omega)$,
we impose a further condition:
\begin{itemize}
 \item [(S5)]  There exists a projection $\Pi_h$ from $H^{1}(\Omega, \mathbb{M})$ 
onto $\Sigma_h$ such that
 $$P_{V_h}\div\Pi_h\sigma=P_{V_h}\div\sigma.$$
\end{itemize}
Here $P_{V_h}:L^2(\Omega,\mathbb{R}^2)\to V_h$ is the $L^2$-projection.

\begin{thm}\label{l2conv}
 Suppose that conditions {\rm(S0)} and {\rm(S5)} are satified.  Then
\begin{multline*}
\norm{\sigma-\sigma_h}_{L^2}+\norm{u-u_h}_{L^2}+\norm{p-p_h}_{L^2}
\\ \le C(
\norm{\sigma-\Pi_h\sigma}_{L^2}+\norm{u-P_{V_h}u}_{L^2}+\norm{p-P_{Q_h}p}_{L^2}).
\end{multline*}
\end{thm}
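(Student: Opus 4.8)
The plan is to follow the standard duality/bootstrap argument for mixed methods with weak symmetry, using the special projection $\Pi_h$ from (S5) to control the $H(\div)$-behavior of the stress error. First I would split the errors using $\Pi_h\sigma$, $P_{V_h}u$, and $P_{Q_h}p$ as intermediate quantities: write $\sigma-\sigma_h = (\sigma-\Pi_h\sigma) + (\Pi_h\sigma-\sigma_h)$, and similarly for $u$ and $p$. Since the first terms on the right in each split are exactly the approximation-error terms appearing in the statement, it suffices to bound the discrete errors $e_\sigma := \Pi_h\sigma-\sigma_h \in \Sigma_h$, $e_u := P_{V_h}u - u_h \in V_h$, and $e_p := P_{Q_h}p - p_h \in Q_h$ in $L^2$ by the right-hand side.

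Next I would derive the error equations. Subtracting the discrete equations $B(y_h,z)=F(z)$ from the consistency identity $B(y,z)=F(z)$ (valid for $z\in Y_h\subset Y$) gives Galerkin orthogonality $B(\sigma-\sigma_h, u-u_h, p-p_h; \tau,v,q)=0$ for all $(\tau,v,q)\in Y_h$. Rewriting in terms of the discrete errors, for all $(\tau,v,q)\in Y_h$:
\begin{align*}
(A e_\sigma,\tau) + (e_u,\div\tau) + (e_p,\as\tau) &= (A(\Pi_h\sigma-\sigma),\tau) + (P_{V_h}u - u,\div\tau) + (P_{Q_h}p - p, \as\tau),\\
(\div e_\sigma, v) &= (\div(\Pi_h\sigma-\sigma),v),\\
(\as e_\sigma, q) &= (\as(\Pi_h\sigma-\sigma),q).
\end{align*}
The key simplifications are: in the first line, $(P_{V_h}u-u,\div\tau)=0$ because $\div\tau\in V_h$ when $\tau\in\Sigma_h$ (this needs the inclusion $\div\Sigma_h\subset V_h$, which holds for our choices since $\div\hat\Sigma\subset\hat V$ and the transformation identity \eqref{transprop} sends $\div P^1_K\hat\tau = P^2_K\div\hat\tau$ — here one uses that $P^2_K$ and $P^0_K$ applied to $\hat V$ give the same space only when $\hat V$ is transformed appropriately; I would invoke the construction of Section~\ref{s:elts} where in fact $V_h$ uses $P_K^0$ while $\div\Sigma_h$ lands in $P^2_K$-images — so more carefully I use $P_{V_h}\div\tau = \div\tau$ up to the relevant projection). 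Likewise $(P_{Q_h}p-p,\as\tau)=0$ since $\as\tau$ composed with the appropriate test space — but here $\as\tau$ need not lie in $Q_h$, so this term survives as $(P_{Q_h}p - p, \as\tau) = (P_{Q_h}p-p, \as\tau - P_{Q_h}\as\tau)$, which I bound by $\|p-P_{Q_h}p\|_{L^2}\|\as\tau\|_{L^2}$. In the second line, property (S5) gives $P_{V_h}\div\Pi_h\sigma = P_{V_h}\div\sigma$, so testing against $v\in V_h$ yields $(\div e_\sigma,v)=0$ for all $v\in V_h$; equivalently $P_{V_h}\div e_\sigma = 0$.

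Then I would estimate $\|e_\sigma\|_{L^2}$ first. Taking $\tau=e_\sigma$, $v = P_{V_h}\div e_\sigma = 0$, $q = \as e_\sigma$ (if $\as\Sigma_h\subset Q_h$; otherwise $q=P_{Q_h}\as e_\sigma$) in a suitable combination of the error equations and using coercivity of $A$ together with the now-established fact $P_{V_h}\div e_\sigma=0$, one gets $(Ae_\sigma,e_\sigma)$ controlled by the approximation terms plus a term involving $\|\as e_\sigma\|$, which is itself controlled by the third error equation. This yields $\|e_\sigma\|_{L^2}\le C(\|\sigma-\Pi_h\sigma\|_{L^2} + \|p-P_{Q_h}p\|_{L^2})$; one must be slightly careful that the first error equation's right side also contains $(Ae_\sigma \text{-type})$ terms, handled by Cauchy–Schwarz and absorbing. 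For $e_u$ and $e_p$ I would use the inf-sup stability (S0) (equivalently (S1)+(S2)): choose a test function $z=(\tau,v,q)\in Y_h$ realizing the inf-sup lower bound for $(0,e_u,e_p)$, i.e. with $\norm{z}_Y \le C(\norm{e_u}_{L^2}+\norm{e_p}_{L^2})$ and $B(0,e_u,e_p;\tau,v,q)\ge c_0(\norm{e_u}+\norm{e_p})^2$; then $B(0,e_u,e_p;z) = B(e_\sigma,e_u,e_p;z) - (Ae_\sigma,\tau) - (\div e_\sigma,v) - (\as e_\sigma, q)$, and the first term equals the known right-hand side of the error equations while the remaining three are bounded by $\|e_\sigma\|_{L^2}\norm{z}_Y$ (noting $(\div e_\sigma,v) = (P_{V_h}\div e_\sigma,v)=0$). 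Combining with the already-obtained bound on $\|e_\sigma\|_{L^2}$ closes the estimate, and the triangle inequality recovers the $u-u_h$ and $p-p_h$ terms.

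The main obstacle I anticipate is bookkeeping the orthogonality terms correctly — specifically, verifying which of $(P_{V_h}u-u,\div\tau)$, $(\div(\Pi_h\sigma-\sigma),v)$, and $(\as(\cdot),q)$ genuinely vanish versus which survive as controllable remainders, since this hinges on the precise inclusions $\div\Sigma_h\subset V_h$ and whether $\as\Sigma_h\subset Q_h$ for the elements at hand. The inclusion $\div\Sigma_h\subset V_h$ does hold for our constructions (it is exactly \eqref{incl} transported to the physical mesh via \eqref{transprop} and the matching of $P^2_K$-transformed functions, built into how $V_h$ was defined), so $(P_{V_h}u-u,\div\tau)=0$ — this is the crucial point that makes the displacement approximation error enter only through $P_{V_h}u$. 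For the rotation, $\as\Sigma_h\subset Q_h$ need not hold in general (e.g. for the $\RT_r\times\RT_r$ stress against $\P_{r-1}$ rotations one should check; for the $\BDM_1$ element it does), so I would keep the $(P_{Q_h}\as\tau - \as\tau)$ remainder and absorb it, which costs only a $\|p-P_{Q_h}p\|_{L^2}$ term — harmless since that term already appears on the right. Everything else is routine Cauchy–Schwarz, coercivity of $A$, and the triangle inequality.
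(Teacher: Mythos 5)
Your overall strategy (split the error via $\Pi_h\sigma$, $P_{V_h}u$, $P_{Q_h}p$, use Galerkin orthogonality and the discrete stability) is the right one, and your second step correctly identifies that (S5) is what kills $(\div(\sigma-\Pi_h\sigma),v)$ for $v\in V_h$. But there is a genuine gap in your first step, and it sits exactly at the quadrilateral subtlety this paper is about: the inclusion $\div\Sigma_h\subset V_h$, which you declare to hold ``for our constructions,'' is false on general quadrilateral meshes. By \eqref{transprop}, $\div(P_K^1\hat\tau)=P_K^2(\div\hat\tau)$ carries the factor $1/J_K$, while $V_h$ is built with $P_K^0$; since $J_K$ is a non-constant (linear) polynomial on a genuine quadrilateral, $P_K^2\hat V\ne P_K^0\hat V$ (e.g.\ for the $\BDM_1$ element, $\div\tau|_K$ is a non-constant multiple of $1/J_K$ while $V_h|_K$ consists of constants). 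Consequently $(P_{V_h}u-u,\div\tau)$ does \emph{not} vanish, and ``up to the relevant projection'' does not rescue your coercivity step: taking $\tau=e_\sigma$ there leaves the term $(P_{V_h}u-u,\div e_\sigma)$, whose estimate requires a bound on $\norm{\div e_\sigma}_{L^2}$. No such bound is available from the quantities on the right-hand side of the theorem (indeed, the poor approximation of the divergence on quadrilaterals is precisely why the theorem is stated in $L^2$), so your stress estimate $\norm{e_\sigma}_{L^2}\le C(\norm{\sigma-\Pi_h\sigma}_{L^2}+\norm{p-P_{Q_h}p}_{L^2})$ is not established. (The circularity between $\norm{e_\sigma}$ and $\norm{e_p}$ that you create by substituting the third error equation is repairable by absorption; the uncontrolled $\norm{\div e_\sigma}$ term is not.)

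The fix is to abandon the ``stress first by coercivity'' step and apply the inf-sup condition (S0) directly to the full discrete error $\eta_h=(\Pi_h\sigma-\sigma_h,\,P_{V_h}u-u_h,\,P_{Q_h}p-p_h)\in Y_h$, which is what the paper does: choose $z=(\tau,v,q)\in Y_h$ with $B(\eta_h,z)\ge c_0\norm{\eta_h}_{L^2}\norm{z}_Y$, use Galerkin orthogonality $B(\eta_h,z)=-B(\bar\eta_h,z)$, and observe that of the five terms in $B(\bar\eta_h,z)$ only the fourth, $(\div(\sigma-\Pi_h\sigma),v)$, needs to vanish (exactly (S5)); the term $(u-P_{V_h}u,\div\tau)$ is simply bounded by Cauchy--Schwarz as $\norm{u-P_{V_h}u}_{L^2}\norm{\div\tau}_{L^2}\le\norm{u-P_{V_h}u}_{L^2}\norm{z}_Y$, so no inclusion $\div\Sigma_h\subset V_h$ and no bound on $\as\Sigma_h$ versus $Q_h$ are needed. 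This one-shot argument also needs only the upper bound of $A$ (boundedness of $B$ off the killed term), not its coercivity, and gives the stated estimate with $C$ depending only on $c_0$ and the upper bound for $A$.
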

\begin{proof}
We decompose the error into the projected error
$$
\eta_h=(\Pi_h\sigma-\sigma_h,P_{V_h}u-u_h,P_{Q_h}p-p_h)\in Y_h,
$$
and the projection error
$$
\bar\eta_h =  (\sigma-\Pi_h\sigma,u-P_{V_h}u,p-P_{Q_h}p).
$$
Making use
of the triangle inequality, it suffices to show that
$\norm{\eta_h}_{L^2} \le C \norm{\bar\eta_h}$.

By the inf-sup condition (S0), there
exists a non-zero $z=(\tau,v,q)\in Y_h$ such that
\begin{equation}\label{eta1}
B(\eta_h,z)\ge c_0\norm{\eta_h}_Y\norm{z}_Y \ge c_0\norm{\eta_h}_{L^2}\norm{z}_Y.
\end{equation}
Now, by Galerkin orthogonality,
\begin{equation}\label{eta2}
B(\eta_h,z)=-B(\bar\eta_h,z).
\end{equation}
The quantity $B(\bar\eta_h,z)$ is a sum of five terms according to
the definition \eqref{defB} of the bilinear form, but the fourth term,
$(\div(\sigma-\Pi_h\sigma),v)$, 
vanishes, because of the assumption (S5).  We then have
\begin{equation}\label{eta3}
B(\bar\eta_h,z)\le C\norm{\bar\eta_h}_{L^2}\norm{z}_Y,
\end{equation}
where $C$ depends only on an upper bound for $A$. Combining
\eqref{eta1}, \eqref{eta2}, and \eqref{eta3},
we conclude that $\norm{\eta_h}_{L^2}\le c_0^{-1}C\norm{\bar\eta_h}_{L^2}$.
\qed
\end{proof}

We now give a simple criteria which makes it easy to verify that 
all finite element spaces introduced in Section~\ref{s:elts} satisfy assumption (S5). See \cite{ABF2005} for details on the verification. 

\begin{lemma}
\label{verification_s5}
Let $\hat{\Pi}$ be a bounded projection operator from $H^{1}(\hat{K},\mathbb{M})$ onto $\hat{\Sigma}$ such that 
\begin{equation}
\label{req_s5}
\div \hat{\Pi}\hat{\sigma} = P_{\hat{V}}\div \hat{\sigma},\quad \forall \hat{\sigma}\in H^{1}(\hat{K},\mathbb{M}),
\end{equation}
where $P_{\hat{V}}$ is the $L^{2}$-projection onto $\hat{V}$. 
Define $\Pi_{h}:H^{1}(\Omega,\mathbb{M})\to V_{h}$ by
\begin{equation*}
\Pi_{h}\sigma |_{K} = P_{K}^{1}\hat\Pi(P_{K}^{1})^{-1}(\sigma|_K),\quad\forall K\in \T_{h}.
\end{equation*} 
Then, we have that 
\begin{equation*}
P_{V_h}\div\Pi_h\sigma=P_{V_h}\div\sigma,\quad \forall \sigma \in H^{1}(\Omega,\mathbb{M}).
\end{equation*}
\end{lemma}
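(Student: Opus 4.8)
The plan is to reduce the global statement to the reference-element identity \eqref{req_s5} by working element-by-element, exploiting the commuting properties \eqref{transprop} and the integral identities \eqref{transint} from the first Lemma. The key structural observation is that the definition of $\Pi_h$ is purely local (it is specified on each $K$ via the pullback $(P_K^1)^{-1}$, the reference projection $\hat\Pi$, and the push-forward $P_K^1$), so it suffices to show that for each fixed $K\in\T_h$ and each $v\in V_h$,
\begin{equation*}
(\div\Pi_h\sigma, v)_{L^2(K)} = (\div\sigma, v)_{L^2(K)};
\end{equation*}
summing over $K$ then gives $(\div\Pi_h\sigma,v)=(\div\sigma,v)$ for all $v\in V_h$, which is exactly $P_{V_h}\div\Pi_h\sigma=P_{V_h}\div\sigma$. (One should note $\Pi_h\sigma$ indeed lands in $\Sigma_h$ by \eqref{sigdef}, since $\hat\Pi$ maps into $\hat\Sigma$; the stated codomain $V_h$ in the lemma is a typo for $\Sigma_h$.)

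Next I would fix $K$, write $\hat\sigma=(P_K^1)^{-1}(\sigma|_K)\in H^1(\hat K,\Mat)$, and note $\Pi_h\sigma|_K = P_K^1\hat\Pi\hat\sigma$. For $v\in V_h$ set $\hat v=(P_K^0)^{-1}(v|_K)\in \hat V$, using \eqref{vdef}. Then by the second identity in \eqref{transint},
\begin{equation*}
(\div\Pi_h\sigma, v)_{L^2(K)} = (\div P_K^1\hat\Pi\hat\sigma, P_K^0\hat v)_{L^2(K)} = (\div\hat\Pi\hat\sigma, \hat v)_{L^2(\hat K)},
\end{equation*}
and likewise $(\div\sigma, v)_{L^2(K)} = (\div\hat\sigma, \hat v)_{L^2(\hat K)}$. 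So it remains to check $(\div\hat\Pi\hat\sigma,\hat v)_{L^2(\hat K)} = (\div\hat\sigma,\hat v)_{L^2(\hat K)}$ for all $\hat v\in\hat V$. By hypothesis \eqref{req_s5}, $\div\hat\Pi\hat\sigma = P_{\hat V}\div\hat\sigma$, and since $\hat v\in\hat V$ the defining property of the $L^2$-projection $P_{\hat V}$ gives $(P_{\hat V}\div\hat\sigma,\hat v)_{L^2(\hat K)} = (\div\hat\sigma,\hat v)_{L^2(\hat K)}$, which closes the argument.

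The only genuine subtlety — the place I would be most careful — is the passage through \eqref{transint}: the identity $(\div P_K^1\hat\tau, P_K^0\hat v)_{L^2(K)} = (\div\hat\tau,\hat v)_{L^2(\hat K)}$ combines the commuting relation $\div P_K^1\hat\tau = P_K^2(\div\hat\tau)$ with the pairing identity $(P_K^2\hat q, P_K^0\hat v)_{L^2(K)} = (\hat q,\hat v)_{L^2(\hat K)}$, the latter being exactly where the Jacobian factor $J_K$ in the definition of $P_K^2$ cancels the change-of-variables Jacobian. Everything else is bookkeeping; no shape-regularity or scaling estimates are needed since the statement is an exact algebraic identity, not an inequality. (Boundedness of $\Pi_h$ as an operator into $\Sigma_h$, needed to invoke condition (S5) in Theorem~\ref{l2conv}, follows separately from boundedness of $\hat\Pi$ together with the standard scaling bounds on $P_K^1$ under shape regularity, but is not required for the identity asserted here.)
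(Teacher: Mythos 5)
Your proposal is correct and follows essentially the same route as the paper: an element-by-element reduction via $(P_K^1)^{-1}$ and $(P_K^0)^{-1}$, the identities \eqref{transprop}--\eqref{transint} to transfer the pairing $(\div\cdot,\cdot)$ to the reference element, and then \eqref{req_s5} together with the defining property of $P_{\hat V}$ (your observation that the codomain $V_h$ in the statement should read $\Sigma_h$ is also accurate). Nothing is missing.
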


\begin{proof}
Given $\sigma\in H^{1}(\Omega,\mathbb{M})$, we have $\hat\sigma:=(P_{K}^{1})^{-1}\left(\sigma |_{K}\right) \in H^{1}(\hat{K},\mathbb{M})$ 
for any $K\in \mathcal{T}_{h}$. For  $v\in V_{h}$, we have $\hat v:=\left( P_{K}^{0}\right)^{-1}\left( v |_{K}\right)\in \hat{V}$ and by \eqref{transprop}, \eqref{transint} and \eqref{req_s5}, we have
\begin{align*}
 (\div \Pi_{h}\sigma, v)_{L^{2}(K)}
&=  (\div P_{K}^{1}\hat{\Pi}\hat\sigma, P_K^0 \hat v)_{L^{2}(K)}
=  (P_{K}^{2}\div \hat{\Pi}\hat \sigma,P_K^0 \hat v)_{L^{2}(K)} 
=  (\div \hat{\Pi}\hat\sigma, \hat v)_{L^{2}(\hat{K})}  
\\
&=  (P_{\hat{V}}\div \hat\sigma, \hat v )_{L^{2}(\hat{K})} 
=  (\div \hat\sigma, \hat v )_{L^{2}(\hat{K})}  
=  (\div\sigma, v)_{L^{2}(K)}.
\end{align*}
\qed
\end{proof}

\subsection{Approximation properties on quadrilateral meshes}

We now recall some results on the approximation rates achieved
by finite element spaces on shape regular meshes of convex
quadrilaterals.  In \cite{ABF2002} it is shown that if $X_h$
is a finite element space of scalar functions derived from
shape function spaces $X_K$ which are themselves obtained from a
reference shape function space $\hat X$ via the transformation
$P_K^0$, then $X_h$ achieves approximation order $r+1$ in
the $L^2$ norm if and only if $\Q_{r} \subset \hat X$.
In \cite{ABF2005}, it shown that if $X_h$ is a finite element space
of vector fields derived from shape function spaces $X_K$ defined
from a reference space $\hat X$ via the \emph{Piola transform}
$P_K^1$, then a necessary and sufficient condition for order $r+1$
approximation in the $L^2$ norm is that $\mathcal{U}_r \subset \hat X$ while
the condition for order $r+1$ approximation of $\div u$ in the $L^2$
is $\mathcal{R}_r \subset \div \hat X$.
Here $\mathcal{U}_r$ is the subspace of codimension $1$ of
$\RT_{r+1}$ defined as the span of the vector fields
\begin{equation*}
(\hat{x}_1^i \hat{x}_2^j , 0),\    (0, \hat{x}_1^j \hat{x}_2^i),\quad   0 \leq i \leq r+1,\ 0 \leq j \leq r,
\end{equation*}
except that the two vector fields $( \hat{x}_1^{r+1} \hat{x}_2^r , 0)$
and $(0,\hat{x}_1^r \hat{x}_2^{r+1})$ are replaced by 
the single vector field $(\hat{x}_1^{r+1} \hat{x}_1^r , -\hat{x}_1^r \hat{x}_1^{r+1})$.
The space $\mathcal{R}_r$ is the subspace of codimension $1$ of $\Q_{r+1}$ spanned by
all its monomials except $\hat{x}_1^{r+1}\hat{x}_2^{r+1}$.

\subsection{Rates of convergence of the proposed elements}

Our first choice of finite element spaces is built from the reference space $\RT_2$ transformed by $P_K^1$,
the space $Q_1$ transformed by $\P_K^0$, and the space $P_1$, not subject to a transformation,
as depicted in Figure~\ref{elts1}.
It follows that each of these spaces achieves quadratic convergence in $L^2$.  In light of
Theorem~\ref{l2conv}, the finite element solution converges quadratically in $L^2$ for all variables if
the solution is smooth.  Concerning approximation of the divergence, we have $\mathcal R_0\subset Q_1=\div\RT_2$,
but $\mathcal R_1\nsubseteq\div\RT_2$, so the approximation error in $H(\div)$ is only first order (and
so the finite element method converges with first order in $H(\div)$ by \eqref{quasiopt}.
Similarly, the higher order methods of this family, described in
Section~\ref{subsection_space2}, achieve order $r$ convergence in $L^2$ for all variables, but in $H(\div)$
the convergence order for the stress is reduced to $r-1$.  Of course on meshes in which all the elements
are square, or, more generally, parallelograms, the rate of convergence in $H(\div)$
is $r$.

Similar reasoning, applied to the simple choice of elements described in Section~\ref{subsection_space3} and
illustrated in Figure~\ref{elts2}, establishes linear convergence for all variables in $L^2$.
However, since $\mathcal R_0\nsubseteq \P_1=\div\BDM_1$,
we do not expect any convergence in $H(\div)$ on general quadrilateral meshes.

\section{Numerical results}\label{s:numer}

In this section, we present simple numerical results which illustrates the error estimates just obtained.
We take the domain to be the unit square and consider two sequences of meshes, the first using uniform meshes
into subsquares, and the second consisting of meshes in which every element is congruent to a fixed
trapezoid, as illustrated in Figure~\ref{meshes}.  The trapezoidal mesh sequence was
introduced in \cite{ABF2002} to study finite element approximation on quadrilateral meshes.   
For the test problem we take the elasticity system with homogeneous Dirichlet boundary conditions
and the exact solution
\begin{align*}
u_1 = \cos (\pi x) \sin (2\pi y),\quad
u_2 = \sin (\pi x) \cos( \pi y).
\end{align*}
The body force $f$ is then determined using the values
$\lambda = 123$ and $\mu = 79.3$ for the Lam\'e coefficients.

\begin{figure}[htb]
\centerline{%
\includegraphics[width=40mm]{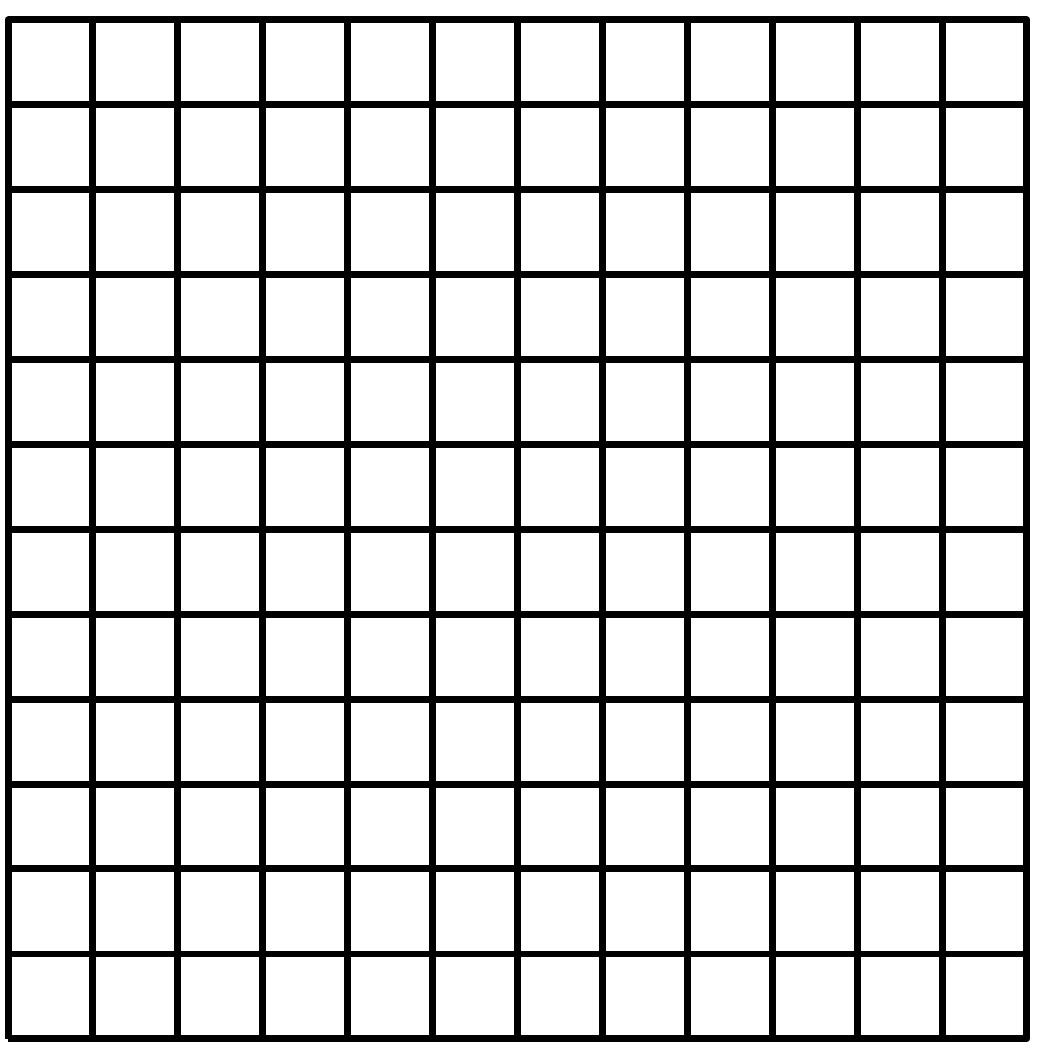}\quad
\includegraphics[width=40mm]{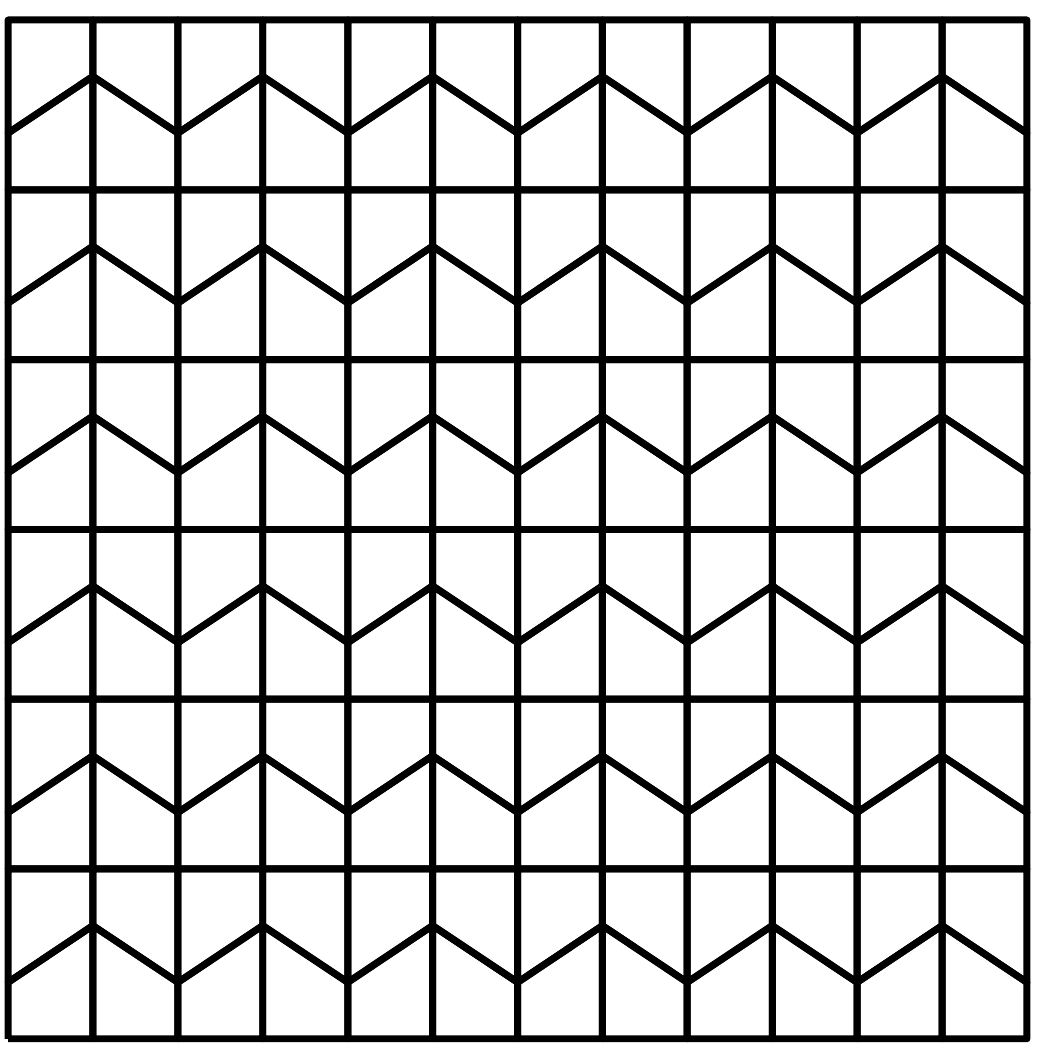}
}
\caption{Square and trapezoidal meshes.}\label{meshes}
\end{figure}

In Table~\ref{numer_tab1},  we show errors and convergence rates in the $L^{2}$ norm for $\sigma$, $\div \sigma$, $u$ and $p$,
using the elements of Section~\ref{subsection_space1}.  As expected all three variables converge quadratically
in $L^2$, while $\div\sigma$ converges only linearly with trapezoidal meshes, and quadratically for square meshes.
Table~\ref{numer_tab2} illustrates the same quantities for the simple 
stable choice of elasticity elements of Section~\ref{subsection_space3}, showing the expected linear convergence,
which reduces to no convergence for the divergence computed with trapezoidal meshes.
 
\begin{table}[tp]
\caption{Convergence results for the elements of Section~\ref{subsection_space1} (illustrated in Figure~\ref{elts1}).} \label{numer_tab1}
\centering
\begin{tabular}{c|clc|clc}
\multicolumn{7}{c}{\rule{0pt}{15pt}\emph{Square meshes}}\\[5pt]
\hline
\rule{0pt}{15pt}
& \multicolumn{3}{c|}{$\Vert \sigma - \sigma_{h} \Vert_{L^{2}(\Omega)}$}  & 
  \multicolumn{3}{c}{$\Vert \div (\sigma-\sigma_h)\Vert_{L^2(\Omega)}$}  \\[5pt]
$h$ & error & $\quad\%$ & order  & error & $\quad\%$ & order\\
\hline
\rule{0pt}{15pt}%
$1/2$ & $3.06$e$+$$2$ & $31.8$ & & $1.83$e$+$$3$ & $35.9$ & \\
$1/4$ & $6.64$e$+$$1$ & $\06.91$ & $2.2$ & $4.19$e$+$$2$ & $\08.21$ & $2.1$\\
$1/8$ & $1.59$e$+$$1$ & $\01.65$ & $2.1$ & $1.07$e$+$$2$ & $\02.10$  & $2.0$\\
$1/16$ & $3.88$e$+$$0$ & $\00.403$ & $2.0$ & $2.70$e$+$$1$ & $\00.529$ & $2.0$\\
$1/32$ & $9.61$e$-$$1$ & $\00.0998$ & $2.0$ & $6.77$e$+$$0$ & $\00.132$ & $2.0$\\
$1/64$ & $2.39$e$-$$1$ & $\00.0248$ & $2.0$ & $1.69$e$+$$0$ & $\00.0331$ & $2.0$\\
$1/128$ & $5.98$e$-$$2$ & $\00.00621$ & $2.0$ & $4.23$e$-$$1$ & $\00.00828$ & $2.0$\\
\hline
\rule{0pt}{15pt}
 & \multicolumn{3}{c|}{$\Vert u-u_h\Vert_{L^{2}(\Omega)}$}  & \multicolumn{3}{c}{$\Vert p-p_h\Vert_{L^{2}(\Omega)}$} \\[5pt]
$h$ & error & $\quad\%$ & order  & error & $\quad\%$ & order\\
\hline
\rule{0pt}{15pt}%
$1/2$ & $2.33$e$-$$1$ & $33.0$ & & $7.28$e$-$$1$ & $41.5$ &\\
$1/4$ & $4.87$e$-$$2$ & $\06.89$ & $2.3$ & $2.17$e$-$$1$ & $12.4$ & $1.7$\\
$1/8$ & $1.24$e$-$$2$ & $\01.76$ & $2.0$ & $5.60$e$-$$2$ & $\03.19$ & $2.0$\\
$1/16$ & $3.12$e$-$$3$ & $\00.442$ & $2.0$ & $1.40$e$-$$2$ & $\00.800$ & $2.0$\\
$1/32$ & $7.82$e$-$$4$ & $\00.110$ & $2.0$ & $3.51$e$-$$3$  & $\00.200$ & $2.0$\\
$1/64$ & $1.95$e$-$$4$ & $\00.0276$ & $2.0$ & $8.78$e$-$$4$ & $\00.0500$ & $2.0$\\
$1/128$ & $4.89$e$-$$5$ & $\00.00691$ & $2.0$ & $2.19$e$-$$4$ & $\00.0125$ & $2.0$\\
\hline
\multicolumn{7}{c}{\rule{0pt}{15pt}\emph{\rule{0pt}{20pt}Trapezoidal meshes}}\\[5pt]
\hline
\rule{0pt}{15pt}
& \multicolumn{3}{c|}{$\Vert \sigma - \sigma_{h} \Vert_{L^{2}(\Omega)}$}  & 
  \multicolumn{3}{c}{$\Vert \div (\sigma-\sigma_h)\Vert_{L^2(\Omega)}$}  \\[5pt]
$h$ & error & $\quad\%$ & order  & error & $\quad\%$ & order\\
\hline
\rule{0pt}{15pt}%
$1/2$ & $3.35$e$+$$2$ & $34.8$ & & $2.06$e$+$$3$ & $40.4$ & \\
$1/4$ & $8.93$e$+$$1$ & $\09.29$ & $1.9$ & $5.95$e$+$$2$ & $11.6$ & $1.8$\\
$1/8$ & $2.11$e$+$$1$ & $\02.19$ & $2.0$ & $1.84$e$+$$2$ & $\03.60$  & $1.6$\\
$1/16$ & $5.24$e$+$$0$ & $\00.560$ & $2.0$ & $7.14$e$+$$1$ & $\01.40$ & $1.3$\\
$1/32$ & $1.30$e$+$$0$ & $\00.135$ & $2.0$ & $3.25$e$+$$1$ & $\00.636$ & $1.1$\\
$1/64$ & $3.26$e$-$$1$ & $\00.0339$ & $2.0$ & $1.58$e$+$$1$ & $\00.310$ & $1.0$\\
$1/128$ & $8.16$e$-$$2$ & $\00.00847$ & $2.0$ & $7.87$e$+$$0$ & $\00.154$ & $1.0$\\
\hline
\rule{0pt}{15pt}
 & \multicolumn{3}{c|}{$\Vert u-u_h\Vert_{L^{2}(\Omega)}$}  & \multicolumn{3}{c}{$\Vert p-p_h\Vert_{L^{2}(\Omega)}$} \\[5pt]
$h$ & error & $\quad\%$ & order  & error & $\quad\%$ & order\\
\hline
\rule{0pt}{15pt}%
$1/2$ & $2.59$e$-$$1$ & $36.7$ & & $7.34$e$-$$1$ & $41.8$ &\\
$1/4$ & $6.57$e$-$$2$ & $\09.30$ & $1.9$  & $2.61$e$-$$1$ & $14.9$ & $1.4$\\
$1/8$ & $1.57$e$-$$2$ & $\02.23$ & $2.0$ & $7.12$e$-$$2$ & $\04.06$ & $1.8$\\
$1/16$ & $3.96$e$-$$3$ & $\00.560$ & $1.9$ & $1.79$e$-$$2$ & $\01.02$ & $1.9$\\
$1/32$ & $9.93$e$-$$4$ & $\00.140$ & $2.0$ & $4.49$e$-$$3$  & $\00.256$ & $2.0$\\
$1/64$ & $2.48$e$-$$4$ & $\00.0351$ & $2.0$ & $1.12$e$-$$3$  & $\00.064$ & $2.0$\\
$1/128$ & $6.20$e$-$$5$ & $\00.00878$ & $2.0$ & $2.81$e$-$$4$ & $\00.0160$ & $2.0$\\
\hline
\end{tabular}
\end{table}

\begin{table}[tp]
\caption{Convergence results for the elements of Section~\ref{subsection_space3} (illustrated in Figure~\ref{elts2}).} \label{numer_tab2}
\centering
\begin{tabular}{c|clc|clc}
\multicolumn{7}{c}{\rule{0pt}{15pt}\emph{Square meshes}}\\[5pt]
\hline
\rule{0pt}{15pt}
& \multicolumn{3}{c|}{$\Vert \sigma - \sigma_{h} \Vert_{L^{2}(\Omega)}$}  & 
  \multicolumn{3}{c}{$\Vert \div (\sigma-\sigma_h)\Vert_{L^2(\Omega)}$}  \\[5pt]
$h$ & error & $\quad\%$ & order  & error & $\quad\%$ & order\\
\hline
\rule{0pt}{15pt}%
$1/2$ & $6.20$e$+$$2$ & $64.5$ & & $3.40$e$+$$3$ & $66.5$ & \\
$1/4$ & $2.51$e$+$$2$ & $26.2$ & $1.3$ & $2.28$e$+$$3$ & $44.8$ & $0.5$\\
$1/8$ & $1.09$e$+$$2$ & $11.4$ & $1.2$ & $1.18$e$+$$3$ & $23.3$  & $0.9$\\
$1/16$ & $5.23$e$+$$1$ & $\05.43$ & $1.1$ & $6.00$e$+$$2$ & $11.7$ & $1.0$\\
$1/32$ & $2.58$e$+$$1$ & $\02.68$ & $1.0$ & $3.01$e$+$$2$ & $\05.89$ & $1.0$\\
$1/64$ & $1.28$e$+$$1$ & $\01.34$ & $1.0$ & $1.50$e$+$$2$ & $\02.95$ & $1.0$\\
$1/128$ & $6.42$e$+$$0$ & $\00.667$ & $1.0$ & $7.53$e$+$$1$ & $\01.47$ & $1.0$\\
\hline
\rule{0pt}{15pt}
 & \multicolumn{3}{c|}{$\Vert u-u_h\Vert_{L^{2}(\Omega)}$}  & \multicolumn{3}{c}{$\Vert p-p_h\Vert_{L^{2}(\Omega)}$} \\[5pt]
$h$ & error & $\quad\%$ & order  & error & $\quad\%$ & order\\
\hline
\rule{0pt}{15pt}%
$1/2$ & $4.29$e$-$$1$ & $60.7$ & & $1.63$e$+$$0$ & $93.4$ &\\
$1/4$ & $2.90$e$-$$1$ & $41.1$ & $0.5$ & $7.97$e$-$$1$ & $45.4$ & $1.0$\\
$1/8$ & $1.49$e$-$$1$ & $21.1$ & $1.0$ & $4.13$e$-$$1$ & $23.6$ & $0.9$\\
$1/16$ & $7.48$e$-$$2$ & $10.6$ & $1.0$ & $2.08$e$-$$1$ & $11.9$ & $1.0$\\
$1/32$ & $3.74$e$-$$2$ & $\05.30$ & $1.0$ & $1.04$e$-$$1$ & $\05.94$ & $1.0$\\
$1/64$ & $1.87$e$-$$2$ & $\02.65$ & $1.0$ & $5.21$e$-$$2$ & $\02.97$ & $1.0$\\
$1/128$ & $9.37$e$-$$3$ & $\01.32$ & $1.0$ & $2.61$e$-$$2$ & $\01.49$ & $1.0$\\
\hline
\multicolumn{7}{c}{\rule{0pt}{15pt}\emph{\rule{0pt}{20pt}Trapezoidal meshes}}\\[5pt]
\hline
\rule{0pt}{15pt}
& \multicolumn{3}{c|}{$\Vert \sigma - \sigma_{h} \Vert_{L^{2}(\Omega)}$}  & 
  \multicolumn{3}{c}{$\Vert \div (\sigma-\sigma_h)\Vert_{L^2(\Omega)}$}  \\[5pt]
$h$ & error & $\quad\%$ & order  & error & $\quad\%$ & order\\
\hline
\rule{0pt}{15pt}%
$1/2$ & $6.67$e$+$$2$ & $69.3$ & & $3.70$e$+$$3$ & $72.4$ & \\
$1/4$ & $2.90$e$+$$2$ & $30.2$ & $1.1$ & $2.58$e$+$$3$ & $50.6$ & $0.52$\\
$1/8$ & $1.22$e$+$$2$ & $12.7$ & $1.2$ & $1.59$e$+$$3$ & $31.3$  & $0.69$\\
$1/16$ & $5.77$e$+$$1$ & $\06.00$ & $1.0$ & $1.19$e$+$$3$ & $23.4$ & $0.42$\\
$1/32$ & $2.84$e$+$$1$ & $\02.95$ & $1.0$ & $1.06$e$+$$3$ & $20.8$ & $0.16$\\
$1/64$ & $1.41$e$+$$1$ & $\01.46$ & $1.0$ & $1.03$e$+$$3$ & $20.2$ & $0.05$\\
$1/128$ & $7.03$e$+$$0$ & $\00.731$ & $1.0$ & $1.02$e$+$$3$ & $20.0$ & $0.01$\\
\hline
\rule{0pt}{15pt}
 & \multicolumn{3}{c|}{$\Vert u-u_h\Vert_{L^{2}(\Omega)}$}  & \multicolumn{3}{c}{$\Vert p-p_h\Vert_{L^{2}(\Omega)}$} \\[5pt]
$h$ & error & $\quad\%$ & order  & error & $\quad\%$ & order\\
\hline
\rule{0pt}{15pt}%
$1/2$ & $4.72$e$-$$1$ & $66.8$ & & $1.70$e$+$$0$ & $97.0$ &\\
$1/4$ & $2.97$e$-$$1$ & $42.1$ & $0.6$ & $9.05$e$-$$1$ & $51.6$ & $0.9$\\
$1/8$ & $1.60$e$-$$1$ & $22.6$ & $0.8$ & $4.46$e$-$$1$ & $25.4$ & $1.0$\\
$1/16$ & $8.05$e$-$$2$ & $11.4$ & $1.0$ & $2.25$e$-$$1$ & $12.8$ & $0.9$\\
$1/32$ & $4.03$e$-$$2$ & $\05.70$ & $1.0$ & $1.12$e$-$$1$ & $\06.42$ & $1.0$\\
$1/64$ & $2.01$e$-$$2$ & $\02.85$ & $1.0$ & $5.64$e$-$$2$ & $\03.21$ & $1.0$\\
$1/128$ & $1.00$e$-$$2$ & $\01.43$ & $1.0$ & $2.82$e$-$$2$ & $\01.61$ & $1.0$\\
\hline
\end{tabular}
\end{table}

In Figure~\ref{locking_free}, we show numerical evidence of the locking-free property of 
the BDM type elements of Section~\ref{subsection_space3} (illustrated in Figure~\ref{elts2}) on trapezoidal meshes. 
The exact solution is the same as above and the Young's modulus $E$ is taken as $1000$.
The two figures show the convergence history of the stress and displacement as a function of the total number of degrees of freedom for the stress, the displacement and the rotation. We used various values of the Poisson ratio $\nu$ close to the limiting value of $0.5$. Recall that 
$$
\lambda=\frac{E \nu}{(1+\nu)(1-2 \nu)}.
$$

\begin{figure}[htb]
\centerline{\includegraphics[width=.5\textwidth]{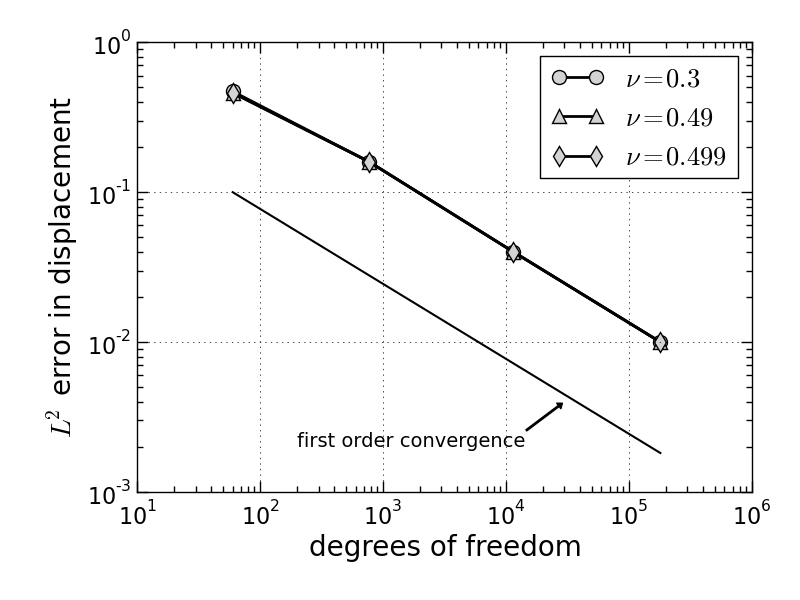}
\includegraphics[width=.5\textwidth]{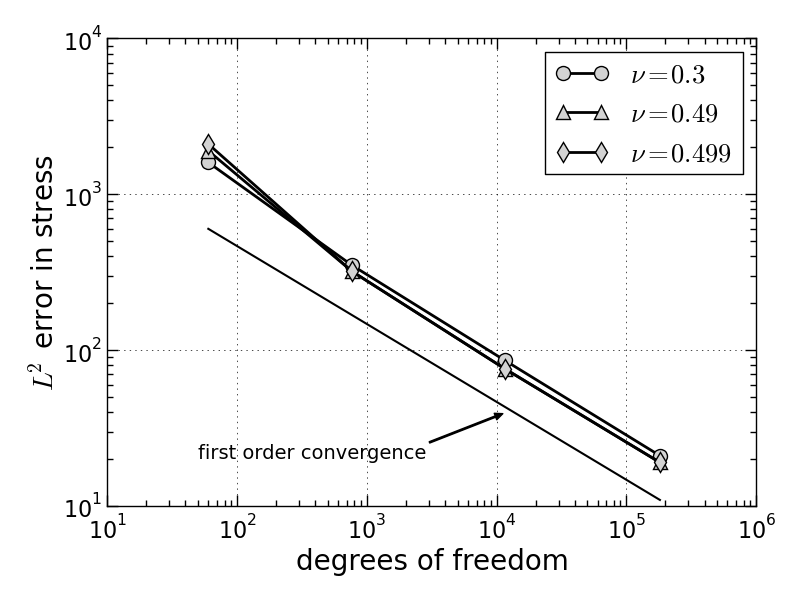}}
\caption{Error of the first order method for several values of the Poisson ratio,
displacement on left, stress on the right.
The curves nearly coincide, illustrating the absence of locking.}
\label{locking_free}
\end{figure} 

\bibliographystyle{amsplain}
\bibliography{quadelas}

\end{document}